\newcommand{\reff}[1]{(\ref{#1})}
\theoremstyle{plain}
\newtheorem{theo}{Theorem}[section]
\newtheorem{theorem}[theo]{Theorem}
\newtheorem{cor}[theo]{Corollary}
\newtheorem{prop}[theo]{Proposition}
\newtheorem{lem}[theo]{Lemma}
\newtheorem{defi}[theo]{Definition}
\theoremstyle{remark}
\newtheorem{rem}[theo]{Remark}
\newcommand{\cb}{{\mathcal B}}
\newcommand{\cc}{{\mathcal C}}
\newcommand{\ce}{{\mathcal E}}
\newcommand{\cg}{{\mathcal G}}
\newcommand{\ch}{{\mathcal H}}
\newcommand{\cn}{{\mathcal N}}
\newcommand{\cs}{{\mathcal S}}
\newcommand{\Tau}{{\mathcal T}}
\newcommand{\D}{{\mathbb D}}
\newcommand{\E}{{\mathbb E}}
\newcommand{\G}{\mathbb{G}}
\newcommand{\N}{{\mathbb N}}
\renewcommand{\P}{{\mathbb P}}
\newcommand{\R}{{\mathbb R}}
\newcommand{\T}{\mathbb{T}}
\newcommand{\ind}{{\bf 1}}
\newcommand{\pol}{C_{{\rm pol}}(\R)}
\newcommand{\inv}[1]{\mathop{\frac{1}{ #1}}\nolimits}
\newcommand{\expp}[1]{\mathop {\mathrm{e}^{ #1}}}
\begin{document}

\title[Aging]{Detection of cellular aging in a  Galton-Watson process}

\date{\today}
\author{Jean-François Delmas}
\address{
Jean-Fran\c cois Delmas,
CERMICS, Univ.  Paris-Est, 6-8
av. Blaise Pascal, 
  Champs-sur-Marne, 77455 Marne La Vallée, France.}
\email{delmas@cermics.enpc.fr}
\author{Laurence Marsalle}
\address{Laurence Marsalle, 
Lab. P. Painlevé, CNRS UMR 8524, Bât. M2, 
Univ. Lille 1, Cité Scientifique, 59655 Villeneuve d'Ascq Cedex, France.
}
\email{Laurence.Marsalle@univ-lille1.fr}

\begin{abstract}

  We consider the bifurcating Markov  chain model introduced by Guyon to
  detect  cellular aging  from cell  lineage. To  take into  account the
  possibility  for a  cell to  die, we  use an  underlying Galton-Watson
  process to  describe the  evolution of the  cell lineage.  We  give in
  this more general framework a  weak law of large number, an invariance
  principle  and  thus fluctuation  results  for  the  average over  one
  generation or  up to  one generation. We  also prove  the fluctuations
  over  each generation  are independent.  Then we  present  the natural
  modifications of the tests given  by Guyon in cellular aging detection
  within the particular case of the auto-regressive model.
\end{abstract}

\keywords{Aging, Galton-Watson process, bifurcating Markov process,
  stable convergence}

\subjclass[2000]{60F05, 60J80, 92D25, 62M05}

\maketitle

\section{Introduction}
This   work  is  motivated   by  experiments   done  by   biologists  on
\textit{Escherichia  coli}, see  Stewart  and al.   \cite{smpt:adormsd}.
\textit{E. coli} is a rod-shaped single celled organism which reproduces
by dividing  in the middle. It produces  a new end per  progeny cell. We
shall  call this new  end the  new pole  whereas the  other end  will be
called the old  pole. The age of a  cell is given by the age  of its old
pole (i.e.  the number of generations in the past of the cell before the
old pole  was produced).   Notice that at  each generation a  cell gives
birth to 2 cells  which have a new pole and one of  the two cells has an
old pole  of age 1  (which corresponds to  the new pole of  its mother),
while  the  other has  an  old  pole with  age  larger  than one  (which
corresponds to the old pole of its mother). The former is called the new
pole daughter and the latter  the old pole daughter.  Experimental data,
see \cite{smpt:adormsd},  suggest strongly that  the growth rate  of the
new pole  daughter is significantly larger  than the growth  rate of the
old   pole   daughter.  For   asymmetric   aging   see  also   Ackermann
\cite{asj:sbad} for  an other case  of asymmetric division,  and Lindner
and al.  \cite{lmdst:aspaacar} or Ackermann and  al. \cite{acbd:oeoa} on
asymmetric damage repartition.

Guyon   \cite{g:ltbmcadca}   studied   a  mathematical   model,   called
bifurcation  Markov chain  (BMC), of  an  asymmetric Markov  chain on  a
regular  binary tree.   This  model allows  to  represent an  asymmetric
repartition for  example of the growth  rate of a cell  between new pole
and old pole daughters. Using this model, Guyon provides tests to detect
a  difference of the  growth rate  between new  pole and  old pole  on a
single  experimental data set,  whereas in  \cite{smpt:adormsd} averages
over  many  experimental  data sets  have  to  be  done to  detect  this
difference. In  the BMC model, cells  are assumed to never  die (a death
corresponds to  no more  division).  Indeed few  death appear  in normal
nutriment saturated  conditions.  However, under  stress condition, dead
cells  can  represent a  significant  part  of  the population.   It  is
therefore natural  to take  this random effect  into account by  using a
Galton-Watson  (GW)  process.   Our  purpose  is to  study  a  model  of
bifurcating Markov chains  on a Galton Watson tree  instead of a regular
tree.   Notice that  inferences  on symmetric  bifurcating processes  on
regular trees have been studied, see the survey of Hwang, Basawa and Yea
\cite{hby:lanbaprai}  and   the  seminal  work  of   Cowan  and  Staudte
\cite{cs:bamcls}.   We also  learned  of a  recent  independent work  on
inferences for asymmetric auto-regressive models by Bercu, De Saporta and
Gégout-Petit  \cite{bdsgp:aabapm}.  Other models  on  cell lineage  with
differentiation  have   been  investigated,  see   for  example  Bansaye
\cite{b:ppdckbmr,b:ccbprei}   on  parasite   infection  and   Evans  and
Steinsaltz   \cite{es:dsfigrsm}   on   asymptotic  models   relying   on
super-Brownian motion.

\subsection{The statistical  model} \label{ssec:stat} In  order to study
the behavior of the growth  rate of cells in \cite{smpt:adormsd}, we set
some notations:  we index  the genealogical tree  by the  regular binary
tree $\T=\{\emptyset \} \cup \bigcup_{k \in \N^*}\{0,1\}^k$; $\emptyset$
is the label of the ancestor and  if $i$ denotes a cell, let $i0$ denote
the  new pole  progeny cell,  and $i1$  the old  pole progeny  cell. The
growth rate  of cell $i$ is $X_i$.   When the mother gives  birth to two
cells among which a unique one  divides, we consider that the cell which
doesn't divide doesn't grow.  We work with the following model (see
Section \ref{par:bmc} for a more general model of BMC on GW tree):
\begin{itemize}
\item With probability $p_{1,0}$, $i$  gives birth to two cells $i0$ and
  $i1$  which  will both  divide.  The  growth  rates of  the  daughters
  $X_{i0}$  and $X_{i1}$  are  then  linked to  the  mother's one  $X_i$
  through the following auto-regressive equations
\begin{equation}
   \label{eq:normal}
\left\{
\begin{array}{rcl}
 X_{i0} & = & \alpha_0 X_i + \beta_0 +\varepsilon_{i0} \\
X_{i1} & = & \alpha_1 X_i + \beta_1 +\varepsilon_{i1}, 
\end{array}
\right.
\end{equation}
where $\alpha_0$, $\alpha_1 \in (-1,1)$, $\beta_0$, $\beta_1 \in \R$ and
$((\varepsilon_{i0},  \varepsilon_{i1}), i  \in  \T)$ is  a sequence  of
independent   centered bi-variate  Gaussian random  variables,  with covariance
matrix
$$ \sigma^2 \left( 
\begin{array}{cc}
 1 & \rho \\
\rho & 1
\end{array}
\right) , \quad \sigma^2 > 0, \quad \rho \in (-1,1).$$

\item With probability $p_0$, only the new pole $i0$ divides. Its growth
  rate $X_{i0}$ is  linked to its mother's one $X_i$ through the
  relation 
\begin{equation}
   \label{eq:dead-0}
X_{i0}  = \alpha_0' X_i + \beta_0' +\varepsilon_{i0}',
\end{equation}
where $\alpha_0' \in (-1,1)$, $\beta_0' \in \R$ and $(\varepsilon_{i0}',
i \in \T)$ is a sequence of independent centered Gaussian random variables
with variance $\sigma_0^2 >0$. 
\item With probability $p_1$, only the old pole $i1$ divides. Its growth
  rate $X_{i1}$ is  linked to it's mother's one through the relation
\begin{equation}
   \label{eq:dead-1}
X_{i1}  =  \alpha_1' X_i + \beta_1' +\varepsilon_{i1}',
\end{equation}
where $\alpha_1' \in (-1,1)$, $\beta_1' \in \R$ and $(\varepsilon_{i1}',
i  \in  \T)$ is  a  sequence  of  independent centered  Gaussian  random
variables with variance $\sigma_1^2 >0$.
\item 
The sequences $((\varepsilon_{i0}, \varepsilon_{i1}), i \in \T)$,
$(\varepsilon_{i0}', i \in \T)$ and $(\varepsilon_{i1}', i \in \T)$ are
independent. 
\end{itemize}

In  Section  \ref{sec:test}, we  first  compute  the maximum  likelihood
estimator (MLE) of the parameter
\begin{equation}
   \label{eq:theta}
\theta =(\alpha_0, \beta_0, \alpha_1, \beta_1, \alpha_0', \beta_0', \alpha_1', \beta_1', p_{1,0}, p_0, p_1)
\end{equation}
and of $\kappa=(\sigma, \rho,  \sigma_0, \sigma_1)$. Then, we prove that
they  are  strongly   consistent  and  that  the  MLE   of  $\theta$  is
asymptotically  normal,   see  Proposition  \ref{prop:lan}   and  Remark
\ref{rem:cv}. Notice  that the  MLE of $(p_{1,0},  p_0, p_1)$,  which is
computed  only on the  underlying GW  tree, was  already known,  see for
example \cite{mt:imbp}.  Eventually, we  explicit a test for aging detection,
for  instance  the  null  hypothesis  $\{(\alpha_0,  \beta_0)=(\alpha_1,
\beta_1)\}$ against its alternative$\{(\alpha_0, \beta_0)\neq (\alpha_1,
\beta_1)\}$, see Proposition \ref{prop:test}. It appears that, for those
hypothesis,  using the
test statistic from \cite{g:ltbmcadca} with incomplete data due to
death cells instead of the test statistic from Proposition
\ref{prop:test} is not conservative, see Remark \ref{rem:conserve}. 

To prove those  results, we shall consider a  more general framework of
BMC which is described in  Section  \ref{par:bmc}. An important tool is
the   auxiliary   Markov   chain   which   is   defined   in   Section
\ref{par:Y}. Eventually easy to read version of our main general results
are given in Section  \ref{par:results}.

\subsection{The mathematical model of bifurcating Markov chain (BMC)}
\label{par:bmc}
We first
introduce  some  notations  related  to  the regular  binary  tree. Let
$\G_0=\{\emptyset\}$,   $\G_k=\{0,1\}^k$  for   $k\in  \N^*$,   $\T_r  =
\displaystyle \bigcup  _{0 \leq  k \leq r}  \G_k$.  The new  (resp. old)
pole daughter  of a cell $i\in \T$  is denoted by $i0$  (resp. $i1$) and
$0$  (resp. 1)  if $i=\emptyset$  is  the initial  cell or  root of  the
tree. The  set $\G_k$  corresponds to all  possible cells in  the $k$-th
generation. We  denote by  $|i|$ the generation  of $i$ ($|i|=k$  if and
only if $i\in
\G_k$). 

For  a cell  $i\in \T$,  let $X_i$  denote a  quantity of  interest (for
example its growth rate). We assume that the quantity of interest of the
daughters of  a cell $i$,  conditionally on the generations  previous to
$i$, depends only on $X_i$.  This property is stated using the formalism
of BMC.  More
precisely,  let  $(E,  \mathcal{E})$   be  a  measurable  space,  $P$  a
probability kernel  on $E \times \mathcal{E}^2$ with  values in $[0,1]$:
$P(\cdot  , A)$  is measurable  for all  $A\in \mathcal{E}^2$  and $P(x,
\cdot)$ is  a probability measure on $(E^2,\mathcal{E}^2)$,  and for any
measurable real-valued bounded function $g$ defined on $E^3$ we set
\[
Pg(x)=\int_{E^2} g(x,y,z)\; P(x,dy,dz).
\] 
\begin{defi}
  We say  a stochastic process indexed  by $\T$, $X=(X_i,  i\in \T)$, is
  a bifurcating Markov chain on a measurable space $(E, \mathcal{E})$ with
  initial distribution  $\nu$ and probability  kernel $P$, a  $P$-BMC in
  short, if:
\begin{itemize}
   \item $X_\emptyset$ is distributed as $\nu$.
   \item For  any measurable  real-valued bounded functions  $(g_i, i\in
     \T)$ defined on $E^3$, we have for all $k\geq 0$,
\[
\E\Big[\prod_{i\in \G_k} g_i(X_i,X_{i0},X_{i1}) |\sigma(X_j; j\in \T_k)\Big] 
=\prod_{i\in \G_k} Pg_i(X_{i}).
\]
\end{itemize}
\end{defi}

We  consider a metric  measurable space  $(S, \cs)$  and add  a cemetery
point to $S$, $\partial$.  Let $\bar S=S\cup\{\partial\}$ and $\bar \cs$
be the  $\sigma$-field generated by  $\cs$ and $\{\partial\}$.   (In the
biological framework  of the previous Section, $S$  corresponds to the
state space of the quantities  of interest and $\partial$ is the default
value for  dead cells.)   Let $P^*$ be  a probability kernel  defined on
$\bar S\times \bar\cs^2$ such that
\begin{equation}
   \label{eq:Pcimetiere}
P^*(\partial,\{(\partial,\partial)\})=1.
\end{equation}
Notice that this condition means that $\partial$ is an absorbing
state. (In the
biological framework  of the previous Section, condition
\reff{eq:Pcimetiere} states  that no dead cell can give birth to a
living cell.)

\begin{defi}
  Let $X=(X_i,  i\in \T)$  be a $P^*$-BMC  on $(\bar S,\bar  \cs)$, with
  $P^*$  satisfying \reff{eq:Pcimetiere}.  We call  $(X_i,  i\in \T^*)$,
  with $\T^*  = \{i \in \T,  X_i \neq \partial\}$,  a bifurcating Markov
  chain  on  a  Galton-Watson  tree.  The $P^*$-BMC  is  said  spatially
  homogeneous   if    $p_{1,0}=P^*(x,S\times   S)$,   $p_0=P^*(x,S\times
  \{\partial\})$ and  $p_1=P^*(x,\{\partial \} \times S)$  do not depend
  on $x \in S$. A spatially homogeneous $P^*$-BMC  is  said
  super-critical if $m>1$ where $m=2p_{1,0} +p_1+p_0$. 
\end{defi}

Notice that condition \reff{eq:Pcimetiere} and the spatially homogeneity
property implies that $\T^*$ is a  GW tree. This justify the name of BMC
on a Galton-Watson tree.  The GW tree is super-critical if and only if
$m>1$. From  now on, we shall only consider super-critical spatially
homogeneous  $P^*$-BMC  on  a  Galton-Watson tree.  (In  the  biological
framework  of the  previous  Section, $\T^*$  denotes  the sub-tree  of
living cells and  the notations $p_{1,0}, p_0$ and  $p_1$ are consistent
since, for instance, $P^*(x,S\times  S)$ represents the probability that
a living cell with growing rate $x$ gives birth to two living cells.)

We  now consider  the Galton-Watson  sub-tree $\T^*$.  For any  subset $J
\subset \T$, let 
\begin{equation}
   \label{eq:defJ*}
J^*= J \cap \T^*=\{j\in J; X_j\neq \partial\}
\end{equation} be the subset of $J$ of living cells
and $|J|$ be the cardinal of $J$. The process $Z= (Z_k, k\in \N)$, where
$Z_k=|\G_k^*|$, is a GW process with reproduction generating function
\[
\psi(z)= (1-p_0-p_1-p_{1,0}) +(p_{0}+ p_{1})z + p_{1,0} z^2.
\]
Notice the average  number of daughters  alive is $m$. 
We have, for $k\geq 0$,
\begin{equation}
   \label{eq:EZET}
\E[|\G_k^*|]=m^k\quad\text{and}\quad  \E[|\T_r^*|]= \sum_{q=0}^r
\E[|\G_q^*|] =\sum_{q=0}^r m^q= \frac{m^{r+1}-1}{m-1}. 
\end{equation}
Let us recall some well-known facts on super-critical GW, see
e.g. \cite{h:tbp} or \cite{an:bp}.
The  extinction probability  of  the GW  process  $Z$ is  $\displaystyle
\eta=\P(|\T^*|<\infty )=1 -\frac{m-1}{p_ {1,0}}$.  There exists a random
variable $W$ s.t. 
\begin{equation}
   \label{eq:defW}
W=\lim_{q\rightarrow\infty } m^{-q}  |\G_q^*|\quad \text{a.s. and in $L^2$},
\end{equation}
$\P(W=0)=\eta$ and whose Laplace
transform,     $\varphi(\lambda)=\E[\expp{-\lambda    W}]$,    satisfies
$\varphi(\lambda)=\psi(\varphi(\lambda/m))$ for $\lambda\geq 0$.
Notice the distribution of $W$ is completely characterized by this
functional equation and  $\E[W]=1$. 

For  $i  \in  \T$,  we  set  $\Delta_i =  (X_i,  X_{i0},  X_{i1})$,  the
mother-daughters quantities of interest.   For a finite subset $J\subset
\T$, we set
\begin{equation}
   \label{eq:defMJ}
M_J(f) = \begin{cases}
\sum_{i\in J} f(X_i) &\text{ for $f\in \cb(\bar S)$,}\\
\sum_{i\in J} f(\Delta_i) &\text{ for $f\in \cb(\bar S^3)$,}
\end{cases}
\end{equation}
with the convention that $M_\emptyset(f)=0$,  and  the following  two averages of $f$ over $J$
\begin{equation}
   \label{eq:averageMJ}
\bar M_J(f)=\inv{|J|}
M_J(f)\quad\text{if}\quad |J|>0\quad\text{and}\quad \tilde M_J(f)=\inv{\E[|J|]}
M_J(f)\quad\text{if}\quad \E[|J|]>0.
\end{equation}
We shall study the asymptotic limit of the averages of a function $f$ for
the BMC  over the $n$-th generation,  $\displaystyle \bar M_{\G_n^*}(f)$
and  $\tilde M_{\G_n^*}(f)$,  or  over  all the  generations  up to  the
$n$-th,  $\displaystyle \bar  M_{\T^*_n}(f)$  and $\displaystyle  \tilde
M_{\T_n^*}(f)$,  as $n$  goes  to  infinity. Notice  the  no death  case
studied in \cite{g:ltbmcadca} corresponds to $p_{1,0}=1$ that is $m=2$.

\subsection{The auxiliary Markov chain}
\label{par:Y}
We    define   the   sub-probability    kernel   on    $S\times   \cs^2$
$P(\cdot,\cdot)=P^*(\cdot,   \cdot  \bigcap   S^2)$  and   two
sub-probability kernels on $S\times \cs$:
\[
P^*_0=P^*(\cdot,(\cdot\bigcap  S)   \times  \bar  S)\quad\text{and}\quad
P_1^*=P(\cdot,\bar S \times (\cdot\bigcap S)).
\] 
Notice  that $P^*_0$  (resp. $P^*_1$)  is the  restriction of  the first
(resp. second)  marginal of $P^*$  to $S$. From spatial  homogeneity, we
have for  all $x\in  S$, $P(x, S^2)=p_{1,0}$ and,  for $\delta\in
\{0,1\}$,
\[
P^*_\delta(x, \{\partial\})=0\quad\text{and}\quad P^*_\delta(x,
S)=p_\delta+ p_{1,0} .
\]
We introduce an auxiliary Markov chain (see Guyon \cite{g:ltbmcadca} for
the case $m=2$). 
Let $Y=(Y_n,  n\in \N) $  be a Markov  chain on $S$ with $Y_0$
distributed  as  $X_\emptyset$ and 
transition  kernel 
\[
Q=\inv{m} (P^*_0+P^*_1).
\]
The  distribution of  $Y_n$
corresponds intuitively  to the  distribution of $X_I$  conditionally on
$I\in  \T^*$,  where  $I$ is  chosen  at  random  in $\G_n$,  see  Lemma
\ref{lem:fYn}  for a  precise  statement.  We  shall  write $\E_x$  when
$X_\emptyset=x$ (i.e. $\nu$ is the Dirac mass at $x\in S$).

Last, we need some more notations  : if $(E,\ce)$ is a metric measurable
space, then  $\cb_b(E)$ (resp.  $\cb_+(E)$)  denotes the set  of bounded
(resp.  non-negative) real-valued measurable  functions defined  on $E$.
The  set  $\cc_b(E)$ (resp.   $\cc_+(E)$)  denotes  the  set of  bounded
(resp.  non-negative)   real-valued  continuous  functions   defined  on
$E$. For a finite measure  $\lambda$ on $(E,\ce)$ and $f\in \cb_b(E)\cup
\cb_+(E)$  we shall  write $\langle  \lambda,f \rangle$  for  $\int f(x)
d\lambda(x)$.

$ $

We consider the following hypothesis $(H)$:

\noindent
\textit{The  Markov  chain  $Y$  is  ergodic, that  is  there  exists  a
  probability measure  $\mu$ on $(S,\cs)$ s.t., for  all $f\in \cc_b(S)$
  and   all   $x\in   S$,  $\displaystyle   \lim_{k\rightarrow\infty   }
  \E_x[f(Y_k)]= \langle \mu,f \rangle$.}

$ $

Notice that  under $(H)$,  the probability measure  $\mu$ is  the unique
stationary  distribution  of  $Y$  and  $(Y_n, n\in  \N)$  converges  in
distribution to $\mu$.

\subsection{The main results}
\label{par:results}
We can  now state our principal results on the  weak law of large
numbers and fluctuations  for the averages over a generation  or up to a
generation.  Those results are a particular case of the more general
statements given in Theorem \ref{th:wlln4} and Theorem \ref{theo:stable}, using Remark \ref{rem:H}.

\begin{theo}
\label{theo:res-intro}
Let  $(X_i,  i\in  \T^*)$  be  a  super-critical  spatially  homogeneous
$P^*$-BMC on a GW tree and  $W$ be defined by \reff{eq:defW}.  We assume
that $(H)$ holds  and that $x\mapsto P^*g(x) \in  \cc_b(\bar S)$ for all
$g \in \cc_b(\bar S^3)$. Let $f\in \cc_b(\bar S ^3)$.
\begin{itemize}
   \item \textbf{Weak law of large numbers.} We have the following convergence
     in probability:
\begin{align*}
 \ind_{\{|\G_r^*|>0 \}} \inv{|\G_r^*|}
\sum_{i\in \G^*_r}f(\Delta_i) 
   &\;\xrightarrow[r\rightarrow \infty ]{\P}\;\langle \mu, P^* f \rangle
   \ind_{\{W\neq 0 \}} ,  \\
\ind_{\{|\G_r^*|>0 \}} \inv{|\T_r ^*|}
\sum_{i\in \T^*_r}f(\Delta_i)
&   \;\xrightarrow[r\rightarrow \infty ]{\P}\;\langle \mu, P^* f \rangle
   \ind_{\{W\neq 0 \}} .  
\end{align*}
   \item \textbf{Fluctuations.} We have the following convergence in
     distribution:  
\[
\ind_{\{|\G^*_r|>0\}} \inv{\sqrt{|\T^*_r|}} \sum_{i\in \T^*_r}
\Big(f(\Delta_i) - P^* f(X_i) \Big)
   \;\xrightarrow[r\rightarrow \infty ]{\text{(d)}}\;
\ind_{\{W\neq 0\}} \sigma G,
\]
where $\sigma^2=\langle \mu, P^* (f^2) - (P^* f)^2\rangle $ and $G$ is a
Gaussian random variable with  mean zero, variance~$1$, and independent of
$W$.
\end{itemize} 
\end{theo}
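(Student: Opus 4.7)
The overall strategy is to combine the martingale structure inherent in the BMC with the ergodic properties of the auxiliary chain $Y$. Let $\mathcal{F}_k=\sigma(X_j;\ j\in \T_k)$ and, for $k\geq 0$,
\[
N_k(f)=\sum_{i\in\G_k^*}\bigl(f(\Delta_i)-P^*f(X_i)\bigr).
\]
By the $P^*$-BMC property and \reff{eq:Pcimetiere}, $(N_k(f))_{k\ge 0}$ is a square-integrable martingale difference sequence with respect to $(\mathcal{F}_k)$, and conditionally on $\mathcal{F}_n$ the triples $\Delta_i$ for $i\in\G_n^*$ are independent. The cornerstone computation is the many-to-one formula
\[
\E_x\Bigl[\sum_{i\in\G_k^*}g(X_i)\Bigr]=m^k\,\E_x\bigl[g(Y_k)\bigr]\qquad\text{for }g\in \cb_b(\bar S),
\]
obtained by iterating the identity $Q=(P_0^*+P_1^*)/m$; this reduces the asymptotics of averages along $\G_k^*$ to those of the auxiliary chain.

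For the law of large numbers, decompose
\[
\tilde M_{\G_n^*}(f)=m^{-n}N_n(f)+m^{-n}\sum_{i\in\G_n^*}P^*f(X_i).
\]
The first summand has $L^2$-norm $O(m^{-n/2})$: cross terms in $\E[N_n(f)^2]$ vanish by conditional independence of distinct branches, while the diagonal is bounded by $4\|f\|_\infty^2\,\E[|\G_n^*|]=4\|f\|_\infty^2\,m^n$. For the second summand, the many-to-one formula combined with $(H)$ and $P^*f\in\cc_b(\bar S)$ shows that its expectation tends to $\langle\mu,P^*f\rangle$, and a similar variance bound via the BMC branching recursion upgrades this to $L^2$-convergence towards $\langle\mu,P^*f\rangle\,W$. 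Dividing by $|\G_n^*|/m^n\to W$ on $\{W\neq 0\}$ yields the first limit. The $\T_n^*$-version follows from \reff{eq:EZET} because $|\G_n^*|/|\T_n^*|\to (m-1)/m$ on $\{W\neq 0\}$, so the top generation dominates the entire sum geometrically.

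For the fluctuations, write $\sum_{i\in\T_r^*}(f(\Delta_i)-P^*f(X_i))=\sum_{k=0}^{r}N_k(f)$ and invoke the stable martingale central limit theorem. The conditional bracket equals $\sum_{k=0}^{r}\sum_{i\in\G_k^*}h(X_i)$ with $h=P^*(f^2)-(P^*f)^2\in\cc_b(\bar S)$, and the LLN already proved, applied to $h$, gives
\[
\frac{1}{|\T_r^*|}\sum_{k=0}^{r}\sum_{i\in\G_k^*}h(X_i)\;\xrightarrow[r\to\infty]{\P}\;\sigma^2\,\ind_{\{W\neq 0\}}.
\]
The Lindeberg condition is immediate from the boundedness of $f$, since individual increments remain bounded while $|\T_r^*|\to\infty$ on $\{W\ne 0\}$. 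The main difficulty, and the reason stable convergence is indispensable rather than mere weak convergence, is producing a Gaussian limit $G$ genuinely independent of $W$: as $\ind_{\{W\neq 0\}}$ and $W$ are measurable with respect to the tail $\sigma$-field generated by $(X_i)_{i\in\T}$, stable convergence promotes the weak limit to joint convergence with any such functional, yielding $G$ independent of this $\sigma$-field and in particular of $W$.
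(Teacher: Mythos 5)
Your law-of-large-numbers argument follows the paper's route: the decomposition $\tilde M_{\G_n^*}(f)=m^{-n}N_n(f)+m^{-n}M_{\G_n^*}(P^*f)$ is exactly how Theorem \ref{th:wlln4} reduces matters to Theorem \ref{th:wlln1}, and your many-to-one formula is Lemma \ref{lem:fYn}. Be aware, though, that the step you summarize as ``a similar variance bound via the BMC branching recursion'' is the real content of Theorem \ref{th:wlln1}: to get $m^{-n}M_{\G_n^*}(g)\to\langle\mu,g\rangle W$ in $L^2$ one must show that the covariances $\E[\bar g(X_i)\bar g(X_j)\ind_{\{i,j\in\T^*\}}]$ (with $\bar g=g-\langle\mu,g\rangle$) decay, which the paper does by splitting the pairs $(i,j)$ according to their most recent common ancestor and using the ergodicity of $Y$ through $Q^{q-r-1}\bar g\to0$; a crude variance bound cannot suffice since the limit is random.

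The fluctuation argument has a genuine gap. With the generation-indexed filtration $(\cf_k)$ the increments of your martingale are $N_k(f)=\sum_{i\in\G_k^*}\bigl(f(\Delta_i)-P^*f(X_i)\bigr)$, and the last one is \emph{not} asymptotically negligible: $\E[N_r(f)^2\mid\cf_r]=M_{\G_r^*}(h)\sim\langle\mu,h\rangle m^r W$ while $|\T_r^*|\sim\frac{m}{m-1}m^r W$, so the single increment $N_r(f)/\sqrt{|\T_r^*|}$ converges to a nondegenerate Gaussian of variance $\frac{m-1}{m}\sigma^2$ and carries a fixed positive fraction of the total bracket. Consequently the conditional Lindeberg condition fails for this array; it is not ``immediate from the boundedness of $f$'', because boundedness controls the cell-level terms $f(\Delta_i)-P^*f(X_i)$, not the generation-level aggregates $N_k(f)$, which contain exponentially many cells. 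To exploit the boundedness of the individual terms one must refine the filtration so that cells are revealed one at a time in an adapted way, which is exactly what Section \ref{sec:tcl} of the paper does: it introduces a random order $\tilde\Pi$ on $\T^*$ preserving the genealogy, the cell-by-cell filtration $\ch$, the time change $\tau_n$ of \reff{eq:snt}, and then applies Rootz\'en's martingale CLT with random change of time (Theorem \ref{theo:root}), whose stable-convergence conclusion also delivers the joint limit with $W$ and hence the independence of $G$ and $W$ (your appeal to stable convergence for this last point is the right idea, but it only becomes available once the CLT itself is correctly set up). Without this, or an equivalent construction, your proof of the fluctuations does not go through.
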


One can  get the strong law  of large numbers under  stronger hypothesis on
$Y$  (such  as  geometric  ergodicity)  using similar  arguments  as  in
\cite{g:ltbmcadca}.  We  also can prove that the  fluctuations over each
generation are asymptotically independent.

\begin{theo}
Let  $(X_i,  i\in  \T^*)$  be  a  super-critical  spatially  homogeneous
$P^*$-BMC on a  GW tree.   We assume that  $(H)$ holds and that $x\mapsto  P^*g(x) \in \cc_b(\bar
  S)$ for all $g \in \cc_b(\bar  S^3)$.  Let $d\geq 1$, and for $\ell\in
  \{1,    \ldots,    d\}$,   $f_\ell    \in    \cc_b(\bar   S^3)$    and
  $\sigma_\ell^2=\langle \mu, P^*  (f_\ell^2) - (P^* f_\ell)^2\rangle $.
  We set for $f\in \cc_b(\bar S^3)$
\[
N_n(f)=\inv{\sqrt{|\G^*_n|}} \sum_{i\in \G^*_n} \Big( f(\Delta_i) - P^*
f(X_i)\Big).
\]
Then we have the following convergence in distribution: 
\[
\left(N_n(f_1), \ldots, N_{n-d+1}(f_d)\right)\ind_{\{|\G^*_n|>0\}}
   \;\xrightarrow[n\rightarrow \infty ]{\text{(d)}}\;
\ind_{\{W\neq 0\}} (\sigma_1 G_1, \ldots, \sigma_d G_d),
\]
where $G_1, \ldots,  G_d$ are independent Gaussian random variables with
mean zero and variance~$1$ and are independent of $W$ given by \reff{eq:defW}.
\end{theo}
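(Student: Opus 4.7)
The plan is to proceed by induction on $d$. The case $d=1$ is the fluctuation part of Theorem \ref{theo:res-intro}. The key idea for $d\geq 2$ is that $N_n(f_1)$ depends on the noise injected between generations $n$ and $n+1$, whereas the remaining coordinates $N_{n-\ell+1}(f_\ell)$, $\ell\geq 2$, are already determined by generations up to $n$; the strategy is therefore to upgrade the one-dimensional fluctuation result to a \emph{stable} convergence result with respect to $\cf_n:=\sigma(X_j; j\in\T_n)$, which then combines freely with the induction hypothesis.

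More precisely, since $\Delta_j=(X_j,X_{j0},X_{j1})$ involves at most generation $n-\ell+2\leq n$ whenever $j\in\G_{n-\ell+1}$ and $\ell\geq 2$, the random vector $V_n:=(N_{n-1}(f_2),\ldots,N_{n-d+1}(f_d))$ is $\cf_n$-measurable. Applied with parameter $d-1$ and index shifted by one, the induction hypothesis gives
\[
V_n\,\ind_{\{|\G^*_{n-1}|>0\}}\;\xrightarrow[n\to\infty]{(d)}\;\ind_{\{W\neq 0\}}(\sigma_2 G_2,\ldots,\sigma_d G_d),
\]
and one checks that the indicators $\ind_{\{|\G^*_n|>0\}}$ and $\ind_{\{|\G^*_{n-1}|>0\}}$ both converge in probability to $\ind_{\{W\neq 0\}}$, using \reff{eq:defW} and standard Galton--Watson facts.

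The heart of the proof is to show that $N_n(f_1)$ converges $\cf_n$-stably to $\ind_{\{W\neq 0\}}\sigma_1 G_1$, where $G_1$ is a standard Gaussian independent of $\bigvee_n\cf_n$. Writing $\xi_i:=f_1(\Delta_i)-P^*f_1(X_i)$ for $i\in\G^*_n$, the BMC property ensures that the $\xi_i$ are, conditionally on $\cf_n$, independent, centered, and of conditional variance $v(X_i)$ with $v:=P^*(f_1^2)-(P^*f_1)^2$. The conditional variance of $N_n(f_1)$ is $\bar M_{\G^*_n}(v)$, which by the weak law of large numbers of Theorem \ref{theo:res-intro} converges in probability to $\sigma_1^2=\langle\mu,v\rangle$ on $\{W\neq 0\}$. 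Since $f_1$ is bounded, the summands $\xi_i/\sqrt{|\G^*_n|}$ are uniformly of order $|\G^*_n|^{-1/2}$, and $|\G^*_n|\to\infty$ on $\{W\neq 0\}$, so Lindeberg's condition is automatic. A pathwise application of the Lindeberg--Feller theorem, conditional on $\cf_n$, then yields the desired $\cf_n$-stable convergence.

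To conclude, the defining property of stable convergence says that for any bounded continuous $h_1$ on $\R$ and any bounded continuous $h_2$ on $\R^{d-1}$,
\[
\E\bigl[h_1(N_n(f_1))\,h_2(V_n)\,\ind_{\{|\G^*_n|>0\}}\bigr]\;\longrightarrow\;\E\bigl[h_1(\ind_{\{W\neq 0\}}\sigma_1 G_1)\bigr]\,\E\bigl[h_2(V_\infty)\bigr],
\]
where $V_\infty=\ind_{\{W\neq 0\}}(\sigma_2 G_2,\ldots,\sigma_d G_d)$ and $G_1$ is independent of $(W,G_2,\ldots,G_d)$. This produces the joint convergence to $\ind_{\{W\neq 0\}}(\sigma_1 G_1,\ldots,\sigma_d G_d)$ with independent standard Gaussians $G_1,\ldots,G_d$, independent of $W$. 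The main obstacle is the $\cf_n$-stable CLT step; everything else is a bookkeeping combination with the preceding results. This step is nonetheless standard once the conditional variance is controlled and the summands are uniformly bounded, as is the case here.
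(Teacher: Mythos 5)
Your argument is correct in substance but follows a genuinely different route from the paper. The paper does not argue generation by generation: it first proves a functional stable limit theorem (Theorem \ref{theo:stable}) for the process $m^{-n/2}M^*_{\tau_n(\cdot)}(f-P^*f)$ via Rootz\'en's martingale CLT with the random time change $\tau_n$ of \reff{eq:snt}, obtaining convergence in the Skorohod space to $\Sigma\sqrt{W}B$ with $B$ a Brownian motion independent of $W$; the present statement is then read off by writing $N_{n-k}(f)\ind_{\{|\G^*_n|>0\}}$ as a normalized increment of that process over the time interval $[m^{-k-1},m^{-k}]$ and invoking the independence of Brownian increments. Your induction replaces this with a one-step conditional argument: given $\cf_n=\sigma(X_j;\,j\in\T_n)$, the variables $f_1(\Delta_i)-P^*f_1(X_i)$, $i\in\G^*_n$, are independent and centered by the very definition of a BMC, the conditional variance $\bar M_{\G^*_n}\big(P^*(f_1^2)-(P^*f_1)^2\big)$ is controlled by the weak law of large numbers (Theorem \ref{th:wlln1}), and a conditional Lindeberg--Feller estimate on characteristic functions gives $\cf_n$-stable convergence of $N_n(f_1)$, which then factorizes against the $\cf_n$-measurable vector $V_n$. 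This is more elementary --- no functional convergence, no time change, no appeal to \cite{r:cltmrct} --- but it yields only this finite-dimensional statement, whereas the paper's Theorem \ref{theo:stable} is reused for the other fluctuation results (e.g.\ over $\T^*_n$).

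Two details need repair. First, the base case $d=1$ is \emph{not} the fluctuation part of Theorem \ref{theo:res-intro}: that result concerns sums over $\T^*_r$ normalized by $\sqrt{|\T^*_r|}$, not sums over a single generation $\G^*_r$. This is harmless, because the conditional CLT you prove as the ``heart of the proof'' already supplies the case $d=1$; it is only a wrong citation. Second, the displayed limit $\E[h_1(\ind_{\{W\neq 0\}}\sigma_1 G_1)]\,\E[h_2(V_\infty)]$ is not what your computation produces: the conditional law of $N_n(f_1)$ given $\cf_n$ converges to $\cn(0,\sigma_1^2)$ only on $\{W\neq 0\}$, so the correct limit of $\E\big[h_1(N_n(f_1))h_2(V_n)\ind_{\{|\G^*_n|>0\}}\big]$ is $\E[h_1(\sigma_1 G_1)]\,\E\big[h_2(V_\infty)\ind_{\{W\neq 0\}}\big]$; adding the contribution $h_1(0)h_2(0)\P(|\G^*_n|=0)\to h_1(0)h_2(0)\eta$ of the extinction event (on which $\{W=0\}$ holds a.s.) then recovers exactly $\E\big[h_1(\ind_{\{W\neq 0\}}\sigma_1 G_1)\,h_2(V_\infty)\big]$, which is the target. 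With these two adjustments your proof goes through.
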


Even if the results  on fluctuations in Theorem \ref{theo:res-intro} are
not  complete, see  the  Remark \ref{rem:intro}  below,  they are  still
sufficient to  study the statistical model  we gave in Section 
\ref{ssec:stat} for the detection of  cellular aging  from  cell lineage
when  death of  cells can occur.

\begin{rem}
\label{rem:intro}
Let $V=(V_r, r\geq 0)$ be a  Markov chain on a finite state space. We
assume  $V$ is irreducible, with transition
matrix  $R$ and unique invariant probability $\mu$. Then it is well
known, see \cite{mt:mcss},
 that $\inv{r}\sum_{i=1}^r h(V_i)$ converges a.s. to $\langle \mu,h
\rangle$ and that, to prove the fluctuations result, one solves the
Poisson equation $H-RH=h-\langle \mu,h \rangle$, writes
\begin{equation}
   \label{eq:meyn}
\inv{\sqrt{r} }\sum_{i=1}^r \Big(h(V_i) - \langle \mu,h
\rangle\Big)
=  \inv{\sqrt{r}
}\sum_{i=1}^r \Big(H(V_i) - RH(V_{i-1})\Big)  +\inv{\sqrt{r}} RH(V_0) -
\inv{\sqrt{r}}RH(V_r),   
\end{equation}
and then uses martingale theory  (we use similar techniques to prove the
fluctuations in Theorem  \ref{theo:res-intro}) to obtain the asymptotic
normality  of $  \inv{\sqrt{r} }\sum_{i=1}^r H(V_i)  - RH(V_{i-1})
$. It  then only  remains to  say that $\inv{\sqrt{r}}  RH(V_0) $  and $
\inv{\sqrt{r}}RH(V_r)$ converge to $0$ to conclude.

Assume  that hypothesis  of  Theorem \ref{theo:res-intro}  hold and  that
$x\mapsto  P^*(x,A)$ is continuous  for all  $A\in \cb(\bar  S^2)$.  Let
$h\in  \cc_b(\bar   S)$.   Theorem  \ref{theo:res-intro}   implies  that
$\ind_{\{|\G_r^*|>0  \}}  \inv{|\T_r  ^*|}  \sum_{i\in  \T^*_r}h(X_i)  $
converges  in probability  to $\langle  \mu, h  \rangle  \ind_{\{W\neq 0
  \}}$.  To  get the fluctuations,  that is the limit  of 
\[
\ind_{\{|\G_r^*|>0  \}} \inv{\sqrt{|\T_r  ^*|}}  \sum_{i\in \T^*_r}\Big(
h(X_i) -\langle  \mu, h \rangle\Big)
\]
as $r$ goes to infinity, using martingale  theory, one can
think of using the  same kind of approach in order to  use the result on
fluctuations of Theorem \ref{theo:res-intro}.  But then notice that what
will correspond  to the boundary term  in \reff{eq:meyn} at  time $r$, $
\inv{\sqrt{r}}RH(V_r)$,  will  now be  a  boundary  term  over the  last
generation   $\G^*_r$  whose   cardinal  is   of  the   same   order  as
$|\T^*_r|$. Thus the order of  the boundary term is not negligible,
which unable us to conclude.  

The
fluctuations for $\sum_{i\in \T^*_r}h(X_i) $ are still an open question.
\end{rem}

\subsection{Organization of the paper}
We quickly study  the auxiliary chain in Section  \ref{sec:Yn}. We state
the  first  result   on  the  weak  law  of   large  number  in  Section
\ref{sec:wlln}.    Section  \ref{sec:technical}   is  devoted   to  some
preparatory  results  in order  to  apply  results  on fluctuations  for
martingale. Our  main result,  Theorem \ref{theo:stable}, is  stated and
proved  in  Section  \ref{sec:tcl}.   The biological  model  of  Section
\ref{ssec:stat} is analyzed in Section \ref{sec:test}.

\section{Preliminary result and notations}
\label{sec:Yn}
Recall the Markov chain $Y$ defined in Section \ref{par:Y}. 
\begin{lem}
\label{lem:fYn}
We have, for $f\in \cb_b(S)\cup \cb_+(S)$,
\begin{equation}
   \label{eq:fYn}
 \E[f(Y_n)]= m^{-n} \sum_{i\in \G_n}
 \E[f(X_i)\ind_{\{i\in 
     \T^*\}}]= \frac{\sum_{i\in \G_n} \E[f(X_i)\ind_{\{i\in
     \T^*\}}]}{\sum_{i\in \G_n} \P(i\in \T^*)} =\E[f(X_I)|I\in \T^*],
\end{equation}
where  $I$  is  a uniform  random variable on  $\G_n$ independent of  $X$.
\end{lem}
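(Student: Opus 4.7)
The plan is to establish the three equalities in turn, with the first (and only nontrivial) one handled by induction on $n$, and the other two following by bookkeeping from \reff{eq:EZET}.

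For the first equality, $\E[f(Y_n)] = m^{-n} \sum_{i \in \G_n} \E[f(X_i) \ind_{\{i \in \T^*\}}]$, I would induct on $n$. The base case $n=0$ is immediate since $Y_0 \stackrel{d}{=} X_\emptyset$ and $\emptyset \in \T^*$ almost surely (as $\nu$ is a distribution on $S$). For the inductive step, by the Markov property of $Y$,
\[
\E[f(Y_{n+1})] = \E[Qf(Y_n)],
\]
and since $Qf \in \cb_b(S)\cup\cb_+(S)$ when $f$ is, the induction hypothesis applied to $Qf$ gives
\[
\E[f(Y_{n+1})] = m^{-n} \sum_{i \in \G_n} \E[Qf(X_i)\ind_{\{i \in \T^*\}}].
\]
It therefore suffices to show
\[
\sum_{j \in \G_{n+1}} \E[f(X_j)\ind_{\{j \in \T^*\}}] = m \sum_{i \in \G_n} \E[Qf(X_i)\ind_{\{i \in \T^*\}}].
\]

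The key step, and the main (small) obstacle, is this last identity, which requires combining the BMC property, the absorbing condition \reff{eq:Pcimetiere}, and the spatial homogeneity that defines $Q$. Extending $f$ by $0$ at $\partial$, I would compute for each $i \in \G_n$ and $\delta \in \{0,1\}$,
\[
\E\bigl[f(X_{i\delta})\ind_{\{i\delta \in \T^*\}}\,\bigm|\, \sigma(X_j;\, j \in \T_n)\bigr] = P^*_\delta f(X_i) \ind_{\{i \in \T^*\}},
\]
using the BMC formula together with the fact that \reff{eq:Pcimetiere} forces $\{i\delta \in \T^*\} \subseteq \{i \in \T^*\}$ (children of a dead cell are dead) and that on $\{X_i=\partial\}$ the left side vanishes. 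Summing over $\delta$ and using $P^*_0 + P^*_1 = mQ$, then taking expectations and summing over $i \in \G_n$, yields the claimed identity, completing the induction.

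The second equality follows at once from \reff{eq:EZET}: $\sum_{i \in \G_n} \P(i \in \T^*) = \E[|\G_n^*|] = m^n$, so replacing $m^{-n}$ by this sum in the denominator is legal. The third equality is a tautology for the uniform $I$ on $\G_n$ independent of $X$: its numerator is $|\G_n|\,\E[f(X_I)\ind_{\{I\in \T^*\}}]$ and its denominator is $|\G_n|\,\P(I\in \T^*)$, which is the definition of $\E[f(X_I)\mid I \in \T^*]$ (noting $\P(I\in\T^*) = m^n/2^n > 0$ since $m>1$, so the conditioning is well defined).
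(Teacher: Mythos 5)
Your proof is correct and takes essentially the same route as the paper's: the paper writes $\E[f(X_i)\ind_{\{i\in\T^*\}}]=\langle\nu,(P^*_{i_1}\cdots P^*_{i_n})f\rangle$ for each $i=i_1\ldots i_n\in\G_n$ and sums to get $\langle\nu,(P^*_0+P^*_1)^nf\rangle=m^n\langle\nu,Q^nf\rangle$, which is exactly your one-generation recursion unrolled into a product of kernels. The only cosmetic differences are that you organize the iterated conditioning as an explicit induction on $n$ (making the use of \reff{eq:Pcimetiere} and of $P^*_0+P^*_1=mQ$ more visible) and that you obtain $\sum_{i\in\G_n}\P(i\in\T^*)=m^n$ from \reff{eq:EZET} rather than from the first equality with $f=1$; both are fine.
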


\begin{proof}
   
We consider the first equality. 
Recall that $Y_0$ has distribution $\nu$.   For
$i=i_1 \ldots i_n \in \G_n$, we have, thanks to
\reff{eq:Pcimetiere} and the definition of $P^*$, 
\[
\E[f(X_i)\ind_{\{i\in 
     \T^*\}}]=\E[f(X_i)\ind_{\{X_i\not= \partial\}}]=
  \langle \nu, \left(P^*_{i_1}
  \ldots P^*_{i_n}\right) f \rangle,
\]
so that 
\begin{align*}
\sum_{i\in \G_n}
 \E[f(X_i)\ind_{\{i\in 
     \T^*\}}]
&= \sum_{i_1, \ldots, i_n \in \{0,1\}}  \langle \nu, \left(P^*_{i_1} 
  \ldots P^*_{i_n}\right) f \rangle\\
&= \langle \nu, \left(P_0^* +  P_1^*\right)^n f \rangle=  m^n \langle \nu, Q^n
f \rangle=m^n \E[f(Y_n)]. 
\end{align*}
This gives the first equality.  Then take $f=1$ in the previous equality
to get $m^n=\sum_{i\in  \G_n} \P(i\in \T^*)$ and the  second equality of
\reff{eq:fYn}. The last equality of \reff{eq:fYn} is obvious.

\end{proof}

We recall that $\nu$ denotes the distribution of $X_\emptyset$. 
% For any function $f$ defined on $\bar S$ (resp. $\bar S^3$), we assume that
% $f(\partial)=0$ (resp. $f(\partial, \cdot, \cdot)=0$). 
Any function $f$ defined on $S$ is extended to $\bar S$ by setting
$f(\partial)=0$. 
Let $F$ be a vector subspace of $\cb(S)$ s.t. 
\begin{itemize}
   \item[$(i)$] $F$ contains the constants;
   \item[$(ii)$] $F^2:=\{f^2; f\in F\} \subset F$;
   \item[$(iii)$]
    \begin{itemize}
      \item $F\otimes F\subset L^1(P(x, \cdot))$ for all $x\in S$
     and $P(f_0\otimes f_1) \in  F$ for all $f_0,f_1\in F$;
   \item For  $\delta\in \{0,1\}$, $F\subset  L^1(P^*_\delta(x, \cdot))$
     for all $x\in S$ and $P^*_\delta (f) \in F$ for all $f\in F$;
    \end{itemize} 
   \item[$(iv)$] There exists a probability measure $\mu$ on $(S, \cs)$
     s.t. $F \subset L^1(\mu)$ and $\displaystyle  \lim_{n\rightarrow \infty}
     \E_x[f(Y_n)]=\langle \mu,f \rangle$ for all $x\in S$ and $f \in F$;
   \item[$(v)$] For all $f\in  F$, there exists $g\in F$ s.t. for all
     $r\in \N$, $|Q^r f | \leq  g$;
   \item[$(vi)$] $F\subset L^1(\nu)$. 
\end{itemize}
By convention a function defined on $\bar S$ is said to belong to $F$ if
its restriction to $S$ belongs to $F$. 
\begin{rem}
   \label{rem:H}
   Notice  that if  $(H)$  is  satisfied and  if  $x\mapsto P^*g(x)$  is
   continuous  on $S$  for  all $g  \in  \cc_b(\bar S^3)$  then the  set
   $\cc_b(S)$ fulfills $(i)-(vi)$.
\end{rem}

\section{Weak law of large numbers}
\label{sec:wlln}
We give the first result of this section. Recall notations
\reff{eq:defJ*} and \reff{eq:defMJ}. 

\begin{theorem}   \label{th:wlln1}   Let  $(X_i,   i\in   \T^*)$  be   a
  super-critical spatially  homogeneous $P^*$-BMC on a GW  tree. Let $F$
  satisfy (i)-(vi)  and $f\in F$.  The  sequence $(\tilde M_{\G_q^*}(f),
  q\in  \N)$ converges to  $\langle \mu,  f \rangle  W$ in  $L^2$, where
  $W$ is defined by  \reff{eq:defW}.  We also have that  the sequence $(
  \bar   M_{\G_q^*}(f)\ind_{\{|\G_q^*|>0\}},  q\in  \N)$   converges  to
  $\langle \mu, f \rangle \ind_{\{W\neq 0\}}$ in probability.
\end{theorem}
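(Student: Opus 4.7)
The plan is to decompose $f = f_0 + \langle \mu, f\rangle$ with $f_0 := f-\langle\mu,f\rangle\in F$ and $\langle\mu, f_0\rangle = 0$ (using $(i)$), and to write
\[
\tilde M_{\G_q^*}(f) = \tilde M_{\G_q^*}(f_0) + \langle \mu, f\rangle\, m^{-q}|\G_q^*|.
\]
Since $m^{-q}|\G_q^*|\to W$ in $L^2$ by \reff{eq:defW}, the first claim reduces to $\tilde M_{\G_q^*}(f_0)\to 0$ in $L^2$. For the mean, Lemma~\ref{lem:fYn} gives $\E[\tilde M_{\G_q^*}(f_0)] = \E[f_0(Y_q)] = \langle\nu, Q^q f_0\rangle$, which tends to $0$ by dominated convergence (pointwise limit $0$ from $(iv)$, uniform dominant $g\in F$ from $(v)$, $\nu$-integrability from $(vi)$).

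For the second moment I would expand
\[
\E\bigl[M_{\G_q^*}(f_0)^2\bigr] = \sum_{i,j\in\G_q} \E\bigl[f_0(X_i)f_0(X_j)\ind_{\{i,j\in\T^*\}}\bigr]
\]
and split by the most recent common ancestor $k = i\wedge j$ at generation $\ell$. The diagonal ($i=j$) gives $m^q\E[f_0^2(Y_q)]$ via Lemma~\ref{lem:fYn} ($f_0^2\in F$ by $(ii)$); after dividing by $m^{2q}$, this contributes $m^{-q}\E[f_0^2(Y_q)]\to 0$ since $m>1$ and the mean is bounded. For off-diagonal pairs with MRCA $k\in\G_\ell$, conditional on $X_k$ the two subtrees rooted at $k0$ and $k1$ are independent $P^*$-BMCs started from $X_{k0}$ and $X_{k1}$ respectively; applying Lemma~\ref{lem:fYn} inside each subtree yields
\[
\sum_{\substack{(i,j)\in\G_q^2\\ i\wedge j = k,\, i\ne j}} \E\bigl[f_0(X_i)f_0(X_j)\ind_{\{i,j\in\T^*\}}\mid X_k\bigr] = 2\, m^{2(q-\ell-1)}\,\ind_{\{k\in\T^*\}}\,P^*[\phi_{q-\ell-1}\otimes\phi_{q-\ell-1}](X_k),
\]
with $\phi_r := Q^r f_0$. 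Summing over $k\in\G_\ell$ (again by Lemma~\ref{lem:fYn}), dividing by $m^{2q}$ and reindexing $t = \ell+1$ reduces the off-diagonal contribution to $\E[\tilde M_{\G_q^*}(f_0)^2]$ to
\[
\frac{2}{m}\sum_{t=1}^q m^{-t}\,\E\bigl[P^*[\phi_{q-t}\otimes\phi_{q-t}](Y_{t-1})\bigr].
\]

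The main obstacle is to show this series vanishes as $q\to\infty$. By $(v)$ pick $g\in F$ with $|\phi_r|\le g$ for all $r$; since $\phi_r(\partial)=0$, $(iii)$ gives $|P^*[\phi_r\otimes\phi_r]|\le P(g\otimes g)\in F$, and one further application of $(v)$ together with $(vi)$ yields a uniform bound $\E[P^*[\phi_r\otimes\phi_r](Y_{t-1})]\le C$, so each summand is dominated by $Cm^{-t}$. For every fixed $t$, $\phi_{q-t}\to 0$ pointwise as $q\to\infty$ by $(iv)$; dominated convergence applied first inside the $P^*$-integral and then under the law of $Y_{t-1}$ shows $\E[P^*[\phi_{q-t}\otimes\phi_{q-t}](Y_{t-1})]\to 0$, and a final dominated-convergence step on the outer sum, with summable dominant $Cm^{-t}$, concludes.

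Finally, the second convergence follows from the identity $\bar M_{\G_q^*}(f) = \tilde M_{\G_q^*}(f)/(m^{-q}|\G_q^*|)$ on $\{|\G_q^*|>0\}$: on $\{W\ne 0\}$ both factors converge in probability by the first part and the ratio tends to $\langle\mu,f\rangle$; on $\{W=0\}$, which coincides a.s.\ with extinction since $\P(W=0)=\eta$, the indicator $\ind_{\{|\G_q^*|>0\}}$ vanishes eventually. This yields the convergence in probability to $\langle\mu,f\rangle\ind_{\{W\ne 0\}}$.
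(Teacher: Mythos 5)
Your proposal is correct and follows essentially the same route as the paper's own proof: centering $f$, reducing to the $L^2$-norm of $\tilde M_{\G_q^*}(f_0)$, splitting the second moment by the most recent common ancestor, expressing the resulting terms through $Q^rf_0$ via Lemma~\ref{lem:fYn}, and killing the series $\sum_t m^{-t}\E\bigl[P(Q^{q-t}f_0\otimes Q^{q-t}f_0)(Y_{t-1})\bigr]$ by domination from $(iii)$--$(vi)$ (the paper runs an explicit $\varepsilon$-argument where you invoke dominated convergence for series, which is equivalent), before deducing the $\bar M$ statement from $m^{-q}|\G_q^*|\to W$ a.s. and $\{W=0\}$ coinciding a.s.\ with extinction. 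No gaps.
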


\begin{proof}
We first assume that  $\langle \mu ,f \rangle=0$. We have, 
\[
\|\sum_{i \in \G_q^*} f(X_i)\|_{L^2}^2 
 =  \E\Big[(\sum_{i \in \G_q}  f(X_i)\ind_{\{i \in \T^*\}})^
   2\Big] 
 =  \sum_{i \in \G_q} \E[f^2(X_i)\ind_{\{i \in \T^*\}}] +
B_q  
=m^q \E[f^2(Y_q)]+B_q,
\]
with     $\displaystyle    B_q=\sum_{(i,j)\in    \G_q^2,     i\neq    j}
\E[f(X_i)f(X_j)\ind_{\{(i,j)   \in    {\T^*}^2\}}]$,   where   we   used
\reff{eq:fYn} for the last equality.

Since the sum in $B_q$ concerns all pairs of
distinct elements of $\G_q$, we have that $i \wedge j$, the most recent
common ancestor of $i$ and $j$, does not belong to $\G_q$. We shall compute
$B_q$ by decomposing according to the generation of $k=i \wedge j$:
$ \displaystyle B_q  
=  \sum_{r=0}^{q-1} \sum_{k \in \G_r} C_k$ with 
\[
C_k
=  \sum_{(i,j)\in \G_q^2,
  i\wedge j = k} \E[f(X_i)f(X_j)\ind_{\{(i,j) \in {\T^*}^2\}} ] .
\]

If  $|k|= q-1$, using  the Markov property of $X$ and of the
GW process at generation $q-1$, we get
\begin{align*}
C_k
&=\sum_{(i,j)\in \G_{1}^2,
  i\wedge j = \emptyset } \E[\E_{X_k} [f(X_i)f(X_j)\ind_{\{(i,j) \in
  {\T^*}^2\}} ]  \ind_{\{k\in \T^*\}}]\\
&=2\E[P(f\otimes 
f)(X_k)\ind_{\{k\in \T^*\}}] .  
\end{align*}

If  $|k|< q-1$, we have, with $r=|k|$, 
\begin{align*}
C_k
&=2\sum_{(i,j)\in \G_{q-r-1}^2}  \E[\E_{X_{k0}} [f(X_i)\ind_{\{i\in
  \T^*\}}] \E_{X_{k1}} 
  [f(X_j)\ind_{\{j\in \T^*\}}]   \ind_{\{k0\in \T^*, k1\in \T^*\}}]\\
&=2 \E[\sum_{i\in \G_{q-r-1}} \E_{X_{k0}} [f(X_i)\ind_{\{i\in \T^*\}}]
\sum_{j\in \G_{q-r-1}} \E_{X_{k1}} 
  [f(X_j)\ind_{\{j\in \T^*\}}]   \ind_{\{k0\in \T^*, k1\in \T^*\}}]\\
&=2 m^{2(q-r-1)}  \E[\E_{X_{k0}} [f(Y_{q-r-1})]  \E_{X_{k1}}
[f(Y_{q-r-1})]     \ind_{\{k0\in \T^*, k1\in \T^*\}}]\\ 
&= 2  m^{2(q-r-1)} \E[P(Q^{q-r-1} f \otimes Q^{q-r-1} f
)(X_k)\ind_{\{k\in \T^*\}} ],
\end{align*}
where we used the Markov   property of $X$ and of the
GW process at generation $r+1$ for the first equality,  \reff{eq:fYn}
for the third equality and the Markov property at generation $r$ for the
last equality. 

In particular, we get that $\displaystyle  C_k=2m^{2(q-r-1)} \E[P(Q^{q-r-1} f \otimes Q^{q-r-1} f
)(X_k)\ind_{\{k\in \T^*\}} ]$ for all $k$ s.t. $|k|\leq  q-1$. 
Using 
\reff{eq:fYn}, 
we deduce that 
\begin{align*}
  B_q  &=  2    \sum_{r=0}^{q-1}   m^{2(q-r-1)} \sum_{k\in  \G_r}
  \E[P(Q^{q-r-1} f \otimes Q^{q-r-1} f
  )(X_k)\ind_{\{k\in \T^*\}} ]\\
  &= 2   \sum_{r=0}^{q-1}   m^{2q -r -2} \langle \nu, Q^r P(Q^{q-r-1} f
  \otimes Q^{q-r-1} f )\rangle .
\end{align*}

Therefore, we get 
\begin{align}
\nonumber
\|\tilde M_{\G_q^*}(f)\| _{L^2}^2 
&= m^{-2q} 
\|\sum_{i \in \G_q^*} f(X_i)\|_{L^2}^2 \\
&= m^{-q} \E[f^2(Y_q)]+ 2  m^{-2}  \sum_{r=0}^{q-1}   m^{-r} \langle \nu,  
Q^r P(Q^{q-r-1} f  
  \otimes Q^{q-r-1} f )  \rangle.
\label{eq:ml2mg}
\end{align}
As   $f\in  F$,  properties   $(ii)$,  $(iv)$, $(v)$   and  $(vi)$   imply  that
$\displaystyle  \lim_{q\rightarrow   \infty  }  m^{-q}  \E[f^2(Y_q)]=0$.
Properties  $(iii)$, $(iv)$ and  $(v)$ with  $\langle \mu,f  \rangle =0$
implies that $ P(Q^{q-r-1} f \otimes  Q^{q-r-1} f) $ converges to $0$ as
$q$ goes to infinity (with $r$  fixed) and is bounded uniformly in $q>r$
by  a function  of $F$.  Thus, properties  $(v)$ and  $(vi)$  imply that
$\langle  \nu,  Q^r  P(Q^{q-r-1}  f  \otimes Q^{q-r-1}  f  )  \rangle  $
converges to $0$ as $q$ goes to infinity (with $r$ fixed) and is bounded
uniformly  in   $q>r$  by   a  finite  constant,   say  $K$.    For  any
$\varepsilon>0$, we  can choose $r_0$ s.t.  $\sum_{r>  r_0} m^{-r} K\leq
\varepsilon$ and $q_0>r_0$ s.t. for $q\geq q_0$ and $r\leq r_0$, we have
$\displaystyle  |\langle \nu, Q^r  P(Q^{q-r-1} f  \otimes Q^{q-r-1}  f )
\rangle|\leq \varepsilon/r_0$.  We then get that for all $q \ge q_0$
\[
\sum_{r=0}^{q-1}  m^{-r}  |\langle  \nu  ,  Q^r  P(Q^{q-r-1}  f  \otimes
Q^{q-r-1}  f  )   \rangle|  \leq  \sum_{r=0}^{r_0}  r_0^{-1}\varepsilon+
\sum_{r=r_0+1}^{q-1} m^{-r} K \leq 2\varepsilon.
\]
This   gives    that   $\displaystyle   \lim_{q\rightarrow    \infty   }
\sum_{r=0}^{q-1}  m^{-r} \langle \nu ,  Q^r P(Q^{q-r-1}  f \otimes  Q^{q-r-1} f
) \rangle=0$.   Eventually, we  get from  \reff{eq:ml2mg} that  if
$\langle \mu,f \rangle =0$, 
then $\displaystyle \lim_{q\rightarrow \infty } \|\tilde M_{\G_q^*}(f)\|
_{L^2}=0$.

For any function $f \in F$, we have, with $g= f -\langle \mu , f \rangle$, 
\[
\tilde  M_{\G_q^*}(f)= \tilde  M_{\G_q^*}(g) + \langle \mu,f \rangle  m^{-q} |\G^*_q|.
\]
As $g\in F$  and $\langle \mu , g  \rangle=0$, the previous computations
yield   that  $\displaystyle   \lim_{q\rightarrow   \infty  }   \|\tilde
M_{\G_q^*}(g)\| _{L^2}=0$.  As $(m^{-q} |\G_q^*|, q \ge 1)$ converges in
$L^2$ (and a.s.)   to $W$, we get that  $\tilde M_{\G_q^*}(f)$ converges
to $\langle \mu, f \rangle W$ in $L^2$.

Then use that $m^{-q} |\G^*_q|$ converges a.s. to $W$ to get the second
part of the Theorem. 
\end{proof}

We now prove a similar result for the average over the $r$-th first
generations. We set $t^r= \E[|\T_r^*|]$, see \reff{eq:EZET}. We first
state en elementary Lemma whose proof is left to the reader.  
\begin{lem}  
\label{l2}  
Let  $(v_r, r   \in  \N)$  be  a  sequence  of
  non negative real  numbers converging to $a  \in \R_+$, and  $m$ a real
  such that $m > 1$. Let
$$w_r = \sum_{q=0}^r m^{q-r-1}v_q.$$
Then the sequence $(w_r, r \in \N)$ converges to $a/(m-1)$.
\end{lem}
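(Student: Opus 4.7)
The proof is a straightforward Cesàro-type estimate for a weighted average whose weights form a geometric progression. My plan is to exploit that the weights $m^{q-r-1}$ for $q=0,\dots,r$ sum to something bounded (in fact, bounded by $1/(m-1)$), so only the $v_q$ with $q$ close to $r$ effectively matter. Write
\[
w_r = \sum_{q=0}^r m^{q-r-1} v_q = m^{-r-1}\sum_{q=0}^r m^q v_q,
\]
and fix $\varepsilon>0$. Using $v_q \to a$, choose $N\in\N$ so that $|v_q-a|\leq \varepsilon$ for every $q\geq N$, and split the sum at the index $N$.

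For the head, $\displaystyle \sum_{q=0}^{N-1} m^{q-r-1} v_q$ involves only finitely many terms each multiplied by $m^{q-r-1}$, and since $m>1$ and $N$ is fixed while $r\to\infty$, this contribution tends to $0$. For the tail, decompose
\[
\sum_{q=N}^r m^{q-r-1} v_q = a\sum_{q=N}^r m^{q-r-1} + \sum_{q=N}^r m^{q-r-1}(v_q-a).
\]
The geometric sum evaluates explicitly:
\[
\sum_{q=N}^r m^{q-r-1} = m^{-r-1}\cdot \frac{m^{r+1}-m^N}{m-1} = \frac{1}{m-1}\bigl(1 - m^{N-r-1}\bigr),
\]
which tends to $1/(m-1)$ as $r\to\infty$, so the first term on the right converges to $a/(m-1)$. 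The second term is bounded in absolute value by
\[
\varepsilon \sum_{q=N}^r m^{q-r-1} \leq \frac{\varepsilon}{m-1},
\]
uniformly in $r$.

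Combining the three estimates yields
\[
\limsup_{r\to\infty}\,\Bigl|w_r - \frac{a}{m-1}\Bigr| \leq \frac{\varepsilon}{m-1},
\]
and letting $\varepsilon\downarrow 0$ gives the conclusion. There is no real obstacle here: the only thing one must be careful about is to verify that the geometric weights are effectively supported near $q=r$ (so that tail behaviour of $v_q$ dominates) and to check that the explicit sum $\sum_{q=N}^r m^{q-r-1}$ remains uniformly bounded, which makes the error term in the tail a fixed multiple of $\varepsilon$. The non-negativity of $v_q$ is not needed for the argument; it only guarantees $a\geq 0$ and that $w_r\geq 0$.
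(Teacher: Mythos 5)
Your proof is correct; the paper explicitly leaves this elementary lemma's proof to the reader, and your $\varepsilon$-splitting of the geometrically weighted sum (negligible head, explicit geometric tail converging to $1/(m-1)$, uniformly bounded error term) is precisely the standard argument intended. The only remark worth adding is one you already made yourself: non-negativity of the $v_q$ plays no role in the convergence proof.
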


Recall notations
\reff{eq:defJ*} and \reff{eq:defMJ}. 
\begin{theorem}   \label{th:wlln2}   Let  $(X_i,   i\in   \T^*)$  be   a
  super-critical spatially homogeneous $P^*$-BMC  on a GW tree.  Let $F$
  satisfy (i)-(vi)  and $f\in F$.  The  sequence $(\tilde M_{\T_r^*}(f),
  r\in \N)$ converges to $\langle \mu,  f \rangle W$ in $L^2$, where $W$
  is defined by \reff{eq:defW}.  We  also have that the sequence $( \bar
  M_{\T_r^*}(f)\ind_{\{|\T_r^*|>0\}}, r  \in \N)$ converges  to $\langle
  \mu, f \rangle \ind_{\{W\neq 0\}}$ in probability.
\end{theorem}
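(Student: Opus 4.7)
The plan is to reduce Theorem \ref{th:wlln2} to the per-generation result of Theorem \ref{th:wlln1} via the obvious decomposition
\[
M_{\T_r^*}(f)=\sum_{q=0}^r M_{\G_q^*}(f),
\]
which, after dividing by $t^r=\E[|\T_r^*|]=\sum_{q=0}^r m^q$, yields the weighted-average representation
\[
\tilde M_{\T_r^*}(f)=\frac{1}{t^r}\sum_{q=0}^r m^q\,\tilde M_{\G_q^*}(f).
\]
Since the weights $m^q/t^r$ sum to one, we can rewrite
\[
\tilde M_{\T_r^*}(f)-\langle\mu,f\rangle W=\frac{1}{t^r}\sum_{q=0}^r m^q\Bigl(\tilde M_{\G_q^*}(f)-\langle\mu,f\rangle W\Bigr).
\]

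For the $L^2$ statement, I would take $L^2$-norms and apply the triangle inequality to bound the right-hand side by $t^{-r}\sum_{q=0}^r m^q v_q$, where $v_q:=\|\tilde M_{\G_q^*}(f)-\langle\mu,f\rangle W\|_{L^2}\to 0$ by Theorem \ref{th:wlln1}. Rewriting $t^{-r}\sum_{q=0}^r m^q v_q=(m^{r+1}/t^r)\sum_{q=0}^r m^{q-r-1}v_q$ and using $m^{r+1}/t^r\to m-1$, Lemma \ref{l2} applied to the deterministic sequence $(v_q)$ with $a=0$ forces the second factor to zero. Hence $\tilde M_{\T_r^*}(f)\to\langle\mu,f\rangle W$ in $L^2$.

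For the convergence in probability of $\bar M_{\T_r^*}(f)$, I would write on $\{|\T_r^*|>0\}$
\[
\bar M_{\T_r^*}(f)=\frac{t^r}{|\T_r^*|}\,\tilde M_{\T_r^*}(f),
\]
and invoke the key auxiliary fact that $|\T_r^*|/t^r\to W$ almost surely. This is obtained by applying Lemma \ref{l2} pathwise to the random sequence $m^{-q}|\G_q^*|$: for $\omega$ in the full-measure set on which $m^{-q}|\G_q^*(\omega)|\to W(\omega)$ (see \reff{eq:defW}), one has
\[
\frac{|\T_r^*(\omega)|}{t^r}=\frac{1}{t^r}\sum_{q=0}^r m^q\cdot m^{-q}|\G_q^*(\omega)|\longrightarrow W(\omega).
\]
On $\{W\neq 0\}$, this gives $t^r/|\T_r^*|\to 1/W$ a.s., and combined with the $L^2$ convergence established above, Slutsky's theorem yields $\bar M_{\T_r^*}(f)\to\langle\mu,f\rangle$ in probability. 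On the extinction event $\{W=0\}$, the GW tree is a.s.\ finite, so $|\G_r^*|=0$ eventually, and the indicator factor reconciling the two sides of the claim vanishes (using $\ind_{\{|\G_r^*|>0\}}\to\ind_{\{W\neq 0\}}$ a.s.).

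The main obstacle lies in the second part: passing from the $L^2$ convergence of $\tilde M_{\T_r^*}(f)$ to the probability convergence of $\bar M_{\T_r^*}(f)$ requires dividing by the random quantity $|\T_r^*|$, which can be small (or, on extinction, bounded) with positive probability. The clean way through is to establish $|\T_r^*|/t^r\to W$ a.s.\ pathwise, split the analysis along $\{W\neq 0\}$ and $\{W=0\}$, and handle the indicators carefully so that both sides match in the limit.
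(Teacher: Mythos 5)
Your proposal is correct and follows essentially the same route as the paper: the same decomposition $\tilde M_{\T_r^*}(f)=t_r^{-1}\sum_{q=0}^r m^q\,\tilde M_{\G_q^*}(f)$, the triangle inequality in $L^2$, Lemma \ref{l2} applied with $a=0$, and the a.s.\ convergence of $t_r^{-1}|\T_r^*|$ to $W$ for the second part. Your treatment of the extinction event is in fact more explicit than the paper's one-line conclusion (and correctly uses $\ind_{\{|\G_r^*|>0\}}$, consistent with Theorems \ref{theo:res-intro} and \ref{th:wlln4}, where the statement of Theorem \ref{th:wlln2} writes $\ind_{\{|\T_r^*|>0\}}$).
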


\begin{proof}
We have 
\begin{align*}
\left \| \frac{1}{t_r} \sum_{i \in \T_r^*}f(X_i) - \langle \mu, f \rangle W \right
\|_{L^2} 
&= \left \| \sum_{q=0}^r \frac{m^q}{t_r} \Big(\tilde  M_{\G^*_q} (f) -
  \langle \mu, f \rangle W \Big)
\right\|_{L^2}\\  
&\le  \sum_{q=0}^r \frac{m^q}{t_r}  \left \| \tilde M_{\G^*_q} (f) -
  \langle \mu, f \rangle W 
\right\|_{L^2}\\
& =  \frac{m-1}{1-m^{-r-1}} 
\sum_{q=0}^r m^{q-r-1}  \left \| \tilde  M_{\G^*_q} (f) -
  \langle \mu, f \rangle W 
\right\|_{L^2}.
\end{align*}
The first part of the Theorem  follows from Theorem \ref{th:wlln1} and
Lemma \ref{l2}.

Use that $m^{-q}  |\G^*_q|$ converges a.s. to $W$  to deduce  that $t_r^{-1}
|\T^*_r|$ converges a.s. to $W$, and thus  get the second part of the Theorem.
\end{proof}

We  end this  section, with  an extension  of the  results  to functions
defined on the mother-daughters  quantities of interest $\Delta_i= (X_i,
X_{i0},  X_{i1})\in  \bar S^3$.   Recall  notations \reff{eq:defJ*}  and
\reff{eq:defMJ}.

\begin{theorem} 
\label{th:wlln4} 
Let  $(X_i,  i\in  \T^*)$  be  a  super-critical  spatially  homogeneous
$P^*$-BMC on  a GW tree. Let  $F$ satisfy (i)-(vi) and  $f\in \cb(\bar S
^3)$.  We  assume that  $P^*f$ and $P^*(f^2)$  exist and belong  to $F$.
Then  the  sequences  $(\tilde  M_{\G_q^*}(f), q\in  \N)$  and  $(\tilde
M_{\T_r^*}(f), r \in  \N)$ converge to $\langle \mu,  P^*f \rangle W$ in
$L^2$, where $W$ is defined  by \reff{eq:defW}; and the sequences $(\bar
M_{\G_q^*}(f)\ind_{\{|\G_q^*|>0    \}},    q\in    \N)$    and    $(\bar
M_{\T_r^*}(f)\ind_{\{|\G_r^*|>0  \}}, r  \in \N)$  converge  to $\langle
\mu, P^*f \rangle \ind_{\{W \neq 0\}}$ in probability.
\end{theorem}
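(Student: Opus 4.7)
The plan is to reduce this triple-variable statement to the single-variable cases of Theorems \ref{th:wlln1} and \ref{th:wlln2} by isolating a conditionally centered residual. First I would write
\[
M_{\G_q^*}(f) = M_{\G_q^*}(P^*f) + \sum_{i\in\G_q} D_i, \qquad D_i := \bigl(f(\Delta_i) - P^*f(X_i)\bigr)\ind_{\{i\in\T^*\}},
\]
and observe that, since $P^*f\in F$ by hypothesis, Theorem \ref{th:wlln1} applied to the function $P^*f$ already gives $\tilde M_{\G_q^*}(P^*f)\to \langle \mu, P^*f\rangle W$ in $L^2$. It then suffices to show that the residual term $m^{-q}\sum_{i\in\G_q} D_i$ tends to $0$ in $L^2$.

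The main work is this residual estimate, and my tool will be the BMC property applied at generation $q$: conditionally on $\sigma(X_j; j\in\T_q)$, the pairs $(X_{i0},X_{i1})_{i\in\G_q}$ are independent, each with law $P^*(X_i,\cdot)$. From this I would deduce that each $D_i$ has conditional mean zero (the factor $\ind_{\{i\in\T^*\}}$ is $\sigma(X_j; j\in\T_q)$-measurable and the conditional expectation of $f(\Delta_i)$ is $P^*f(X_i)$), and that $D_i,D_j$ are conditionally uncorrelated whenever $i\neq j$ in $\G_q$. The cross terms then collapse and
\[
\Bigl\|\sum_{i\in\G_q}D_i\Bigr\|_{L^2}^2 \;=\; \sum_{i\in\G_q}\E[D_i^2] \;=\; \sum_{i\in\G_q} \E\bigl[g(X_i)\ind_{\{i\in\T^*\}}\bigr],
\]
with $g := P^*(f^2)-(P^*f)^2$. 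I would argue $g\in F$ using that $P^*(f^2)\in F$ by hypothesis and $(P^*f)^2\in F$ by property $(ii)$ applied to $P^*f\in F$, together with the vector-space structure of $F$. Lemma \ref{lem:fYn} then rewrites the last sum as $m^q\E[g(Y_q)]$, so dividing by $m^{2q}$ yields $m^{-q}\E[g(Y_q)]$, which tends to $0$ since $\E[g(Y_q)]$ is uniformly bounded in $q$ by properties $(v)$, $(vi)$ and $(iv)$.

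The passage from $\G_q^*$ to $\T_r^*$ should be a formality: from the identity $\tilde M_{\T_r^*}(f) = \sum_{q=0}^r (m^q/t_r)\tilde M_{\G_q^*}(f)$, Minkowski's inequality, and Lemma \ref{l2}, I get the $L^2$ statement for $\T_r^*$ in exact parallel with the proof of Theorem \ref{th:wlln2}. Finally, the convergence in probability of $\bar M_{\G_q^*}(f)\ind_{\{|\G_q^*|>0\}}$ (respectively $\bar M_{\T_r^*}(f)\ind_{\{|\G_r^*|>0\}}$) follows by writing it as $\tilde M_{\G_q^*}(f)\cdot (m^q/|\G_q^*|)\ind_{\{|\G_q^*|>0\}}$ (resp.\ the $\T_r^*$ analogue) and combining the $L^2$ limit with the a.s.\ convergence $m^{-q}|\G_q^*|\to W$ and $t_r^{-1}|\T_r^*|\to W$ via Slutsky, noting that on $\{W>0\}$ one eventually has $|\G_q^*|>0$. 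The only step requiring any care is the bookkeeping of the indicator $\ind_{\{i\in\T^*\}}$ inside $D_i$ when computing conditional moments; once the pairwise orthogonality of the $D_i$'s is secured, everything else is mechanical.
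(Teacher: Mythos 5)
Your proposal is correct and follows essentially the same route as the paper: the paper's one-line identity $\|M_{\G_q^*}(f)\|_{L^2}^2=\|M_{\G_q^*}(P^*f)\|_{L^2}^2+\E[M_{\G_q^*}(P^*(f^2)-(P^*f)^2)]$ is exactly your decomposition into $M_{\G_q^*}(P^*f)$ plus the conditionally centered, pairwise orthogonal residuals $D_i$, after which both arguments invoke Theorem \ref{th:wlln1} for $P^*f$ and reduce the $\T_r^*$ and $\bar M$ statements to Theorem \ref{th:wlln2} and the a.s.\ convergence of $m^{-q}|\G_q^*|$ to $W$. The only cosmetic difference is that you bound the residual's second moment directly by $m^{-q}\E[g(Y_q)]$ via Lemma \ref{lem:fYn}, whereas the paper gets the same conclusion by noting that $\tilde M_{\G_q^*}(g)$ converges in $L^1$; both are valid.
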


\begin{proof}
  Recall that $M_{\G_q^*}(f) =  \sum_{i \in \G_q^*} f(\Delta_i)$. The
  Markov property for BMC gives
\[\|M_{\G_q^*}(f) \|_{L^2}^2 = \|M_{\G_q^*}(P^*f) \|_{L^2}^2 +
\E[M_{\G_q^*}(P^*(f^2)-(P^*f)^2)].
\]
Since  $(m^{-q} M_{\G_q^*}(P^*(f^2)-(P^*f)^2), q  \in \N)$  converges to
$\langle \mu, P^*(f^2)-(P^*f)^2 \rangle$ in  $L^2$ and thus in $L^1$, we
have that $m^{-2q}\E[M_{\G_q^*}(P^*(f^2)-(P^*f)^2)]$ converges to $0$ as
$q$  goes to  infinity.  Then,  we deduce  the  convergence of  $(\tilde
M_{\G_q^*}(f), q\in \N)$ and $(\bar M_{\G_q^*}(f)\ind_{\{|\G^*_r|>0 \}},
q\in \N)$ from Theorem \ref{th:wlln1}.

The proof for the convergence of $(\tilde M_{\T_r^*}(f), r \in \N)$ and  $(\bar
M_{\T_r^*}(f)\ind_{\{|\G^*_r|>0 \}}, r \in  \N)$ mimics then the proof of
Theorem \ref{th:wlln2}.
\end{proof}

\section{Technical results about the weak law of large numbers}
\label{sec:technical}

The technical  Propositions of this section  deal with the  average of a
function   $f$   when  going   through   $\T^*$  \emph{via}   timescales
$(\tau_n(t), t  \in [0,1])$  preserving the genealogical  order. Roughly
speaking, these timescales allow to  visit the sub-tree $\T^*$. In order
to define $(\tau_n(t),  t \in [0,1])$ we need to  define $I_n^*$, set of
the  $n$ ``first''  cells of  $\T^*$.  
Let  $(X_i,  i\in  \T^*)$  be  a  super-critical  spatially  homogeneous
$P^*$-BMC on  a GW tree and   $\cg$ be  the $\sigma$-field
generated by $(X_i, i \in \T)$.
\begin{itemize}
\item We consider random variables
  $(\Pi_q^*,  q\in \N^*)$ which are conditionally on $\cg$,  independent
  and s.t.   $\Pi_q^*$ is  distributed as a uniform random  permutation on
  $\G_q^*$.  In  particular, given  $|\G_q^*|=k$,
  $(\Pi_q^*(1), \ldots,  \Pi_q^*(k))$ can be viewed as  a random drawing
  of all the elements of $\G_q^*$, without replacement.

\item  For each integer  $n \in \N^*$, we  define the  random variable
  $\displaystyle \rho_n=\inf\{ k; n\leq |\T^*_k|\}$, with the convention
  $\inf \emptyset =\infty $. Loosely speaking, $\rho_n$ is the number of
  the generation to which belongs  the $n$-th element of $\T^*$.  Notice
  that $\rho_1=0$.

\item Let $\tilde \Pi$ be the  application from  $\N^*$ to $\T^*
\cup \{\partial_\T\}$, where $\partial_\T $ is a cemetery point added to
$\T^*$,  given by 
$\tilde{\Pi}(1)=\emptyset$ and for $k \ge 2$ :
\[
\tilde{\Pi}(k) = 
\begin{cases}
 \Pi_{\rho_k}^*(k-|\T_{\rho_k-1}^*|) & \text{if $\rho_k < +\infty $}\\
  \partial_\T & \text{if $\rho_k = +\infty $.}
\end{cases}
\]
\end{itemize} 

Notice that $\tilde \Pi$ defines a
random order on $\T^*$ which preserves the genealogical order: if $k\leq
n$ then $|\tilde \Pi (k)|\leq |\tilde \Pi(n)|$, with the convention
$|\partial_\T|=\infty $. 
We thus define the set of the $n$ ``first'' elements of $\T^*$ (when $|\T^*|\geq n$): 
\begin{equation}
   \label{eq:nfirst}
I_n^* = \{\tilde{\Pi}(k), 1 \le k \le n \wedge |\T^*|\}.
\end{equation}

We can  now introduce  the timescales:  for $n \ge  1$, we  consider the
subdivision  of   $[0,1]$  given  by  $\{0,s_n,   \ldots,  s_0\}$,  with
$s_k=m^{-k}$. We define the continuous random time change $(\tau_n(t), t
\in [0,1])$ by 
\begin{equation}
   \label{eq:snt}
\tau_n(t)=\begin{cases}
 m^n t,& t\in [0,  m^{-n}], \\
|\T^*_{n-k}|+(m^kt-1)(m-1)^{-1}|\G^*_{n-k+1}|, &  t\in
[m^{-k}, m^{-k+1}], 1\leq
k\leq n.
\end{cases}
\end{equation}
Notice   that    $\tau_n(t)\leq   |\T^*|$.    The    set   $I^*_{\lfloor
  \tau_n(t)\rfloor }$, with $t\in [0,1]$, corresponds to the elements of
$\T^*_{n-k}$, with $k=\lfloor -\frac{ \log(t)}{\log(m)} \rfloor +1$, and
the ``first'' fraction $(m^k t-1)/(m-1)  $ of the elements of generation
$\G^*_{n-k+1}$.

% that
% $$
%    \label{eq:def-Mfn}
% M_{I_n^*}(f) =
% \begin{cases}
% \sum_{i\in I_n^*}   f(X_i) &\text{ for $f\in \cb(\bar S)$,}\\ 
% \sum_{i \in I_n^*}  f(\Delta_i) &\text{ for $f\in
%   \cb({\bar S}^3)$.} 
% \end{cases}
% $$
Recall  \reff{eq:defMJ}. 
For the sake of  simplicity, for any real $x \ge 0$, we
will write $M_x^*(f)$ instead  of $M_{I^*_{\lfloor x \rfloor}}(f)$, with
the convention that $M_0^*(f)=0$.

\begin{prop}
\label{prop:snt}
  Let  $F$ satisfy  (i)-(vi),  $f\in F$ and $t \in  [0,1]$. The
  sequence  $(m^{-n}M^*_{ \tau_n(t)}(f)$, $n  \in \N^*)$
  converges to $\langle \mu , f \rangle m(m-1)^{-1}  W t$ in $L^2$.
\end{prop}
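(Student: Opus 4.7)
The plan is to decompose $M^*_{\tau_n(t)}(f)$ into a contribution over the full subtree $\T^*_{n-k}$ plus a partial contribution over generation $\G^*_{n-k+1}$, and then invoke the previously established $L^2$ laws (Theorems \ref{th:wlln1} and \ref{th:wlln2}) together with a classical variance bound for sampling without replacement.

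For $t=0$ the claim is trivial since $\tau_n(0)=0$. Fix $t\in(0,1]$ and let $k\ge 1$ be the unique integer with $t\in[m^{-k},m^{-k+1}]$. Set $p:=(m^k t-1)/(m-1)\in[0,1]$. For $n\ge k$, \reff{eq:snt} and the construction of $\tilde\Pi$ give
\[
\lfloor\tau_n(t)\rfloor=|\T^*_{n-k}|+L_n,\qquad L_n:=\lfloor p\,|\G^*_{n-k+1}|\rfloor,
\]
so that $M^*_{\tau_n(t)}(f)=M_{\T^*_{n-k}}(f)+S_n$ with $S_n:=\sum_{j=1}^{L_n} f(X_{\Pi^*_{n-k+1}(j)})$. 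Combining $\E[|\T^*_{n-k}|]=(m^{n-k+1}-1)/(m-1)$ with Theorem \ref{th:wlln2} handles the full-subtree term:
\[
m^{-n}M_{\T^*_{n-k}}(f)\;\xrightarrow[n\to\infty]{L^2}\;\frac{m^{1-k}}{m-1}\,\langle\mu,f\rangle\,W.
\]

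The main work is the partial sum $S_n$. Since $\Pi^*_{n-k+1}$ is, conditionally on $\cg$, a uniform random permutation of $\G^*_{n-k+1}$, the $L_n$ values $f(X_{\Pi^*_{n-k+1}(j)})$ constitute a simple random sample without replacement from $\{f(X_i):i\in\G^*_{n-k+1}\}$, yielding
\[
\E[S_n\mid\cg]=\frac{L_n}{|\G^*_{n-k+1}|}M_{\G^*_{n-k+1}}(f),\qquad \mathrm{Var}(S_n\mid\cg)\le L_n\,\bar M_{\G^*_{n-k+1}}(f^2)\le M_{\G^*_{n-k+1}}(f^2).
\]
Since $f^2\in F$ by (ii), Lemma \ref{lem:fYn} together with (v)--(vi) gives $\E[M_{\G^*_{n-k+1}}(f^2)]=m^{n-k+1}\E[f^2(Y_{n-k+1})]=O(m^{n-k+1})$, hence the centered residual is negligible:
\[
\bigl\|m^{-n}\bigl(S_n-\E[S_n\mid\cg]\bigr)\bigr\|_{L^2}^2\le m^{-2n}\,\E[M_{\G^*_{n-k+1}}(f^2)]=O(m^{-n})\to 0.
\]
Writing $L_n/|\G^*_{n-k+1}|=p+\delta_n$ with $|\delta_n|\le 1/|\G^*_{n-k+1}|$ (and $\delta_n=0$ when $|\G^*_{n-k+1}|=0$), one splits
\[
m^{-n}\E[S_n\mid\cg]=p\,m^{1-k}\,\tilde M_{\G^*_{n-k+1}}(f)+\delta_n\,m^{1-k}\,\tilde M_{\G^*_{n-k+1}}(f).
\]
The first summand converges in $L^2$ to $pm^{1-k}\langle\mu,f\rangle W$ by Theorem \ref{th:wlln1}. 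For the second, $\delta_n\to 0$ almost surely (it vanishes on extinction and $|\G^*_{n-k+1}|\to\infty$ on non-extinction), $|\delta_n|\le 1$, and $(\tilde M_{\G^*_{n-k+1}}(f))^2$ is uniformly integrable (being $L^1$-convergent by Theorem \ref{th:wlln1}); consequently $\delta_n\tilde M_{\G^*_{n-k+1}}(f)\to 0$ in $L^2$.

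Assembling the two contributions and using the identity $1+p(m-1)=m^k t$, the $L^2$ limit of $m^{-n}M^*_{\tau_n(t)}(f)$ equals
\[
\left(\frac{m^{1-k}}{m-1}+p\,m^{1-k}\right)\langle\mu,f\rangle W=\frac{m^{1-k}(1+p(m-1))}{m-1}\langle\mu,f\rangle W=\frac{m\,t}{m-1}\langle\mu,f\rangle W,
\]
which is the claim. The delicate step is the partial generation sum $S_n$: conditioning on $\cg$ exploits the exchangeability built into $\Pi^*_{n-k+1}$ to reduce it to a main piece controlled by Theorem \ref{th:wlln1} plus a centered residual whose conditional variance is bounded by $M_{\G^*_{n-k+1}}(f^2)$, an estimate that the scaling $m^{-2n}$ comfortably defeats.
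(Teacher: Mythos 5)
Your proof is correct and follows essentially the same route as the paper: the same decomposition of $M^*_{\tau_n(t)}(f)$ into the full subtree $\T^*_{n-k}$ plus a partial sum over generation $\G^*_{n-k+1}$, the same conditioning on $\cg$ to exploit the without-replacement structure of $\Pi^*_{n-k+1}$, and the same appeal to Theorems \ref{th:wlln1} and \ref{th:wlln2}. The only organizational difference is that the paper first reduces to the centered case $\langle \mu, f\rangle=0$, so that the partial generation term is entirely negligible and only a second-moment bound is needed (the limit then being recovered from the deterministic convergence $m^{-n}\lfloor\tau_n(t)\rfloor\to m(m-1)^{-1}tW$), whereas you keep the mean and extract it from $\E[S_n\mid\cg]$, at the cost of the extra $\delta_n$ uniform-integrability bookkeeping.
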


\begin{proof}%[Proof of Proposition \ref{prop:snt}]
  We first consider  the case  $\langle \mu,f \rangle =0$. If $t=0$, then
  $\tau_n(t)=0$ and $M^*_0(f)=0$ by convention. Let $t \in (0,1]$. There
  exists a  unique $k \ge 1$ such  that $m^{-k} < t  \le m^{-k+1}$.  For
  $n\geq  k$, we  have,  using 
  \reff{eq:snt} and that $\tilde{\Pi}$ preserves the order on $\T^*$,
\[
 M^*_{\tau_n(t)}(f)= \sum_{i\in I_{\lfloor \tau_n(t) \rfloor}^*}   f(X_i) =\sum_{i=1}^{\lfloor \tau_n(t) \rfloor} f(X_{\tilde{\Pi}(i)}) =
M_{\T^*_{n-k}} (f) +M_{J_n}(f),
\]
where $J_n  =   \{  \tilde{\Pi}(i),     |\T^*_{n-k}|<i\leq   \lfloor
\tau_n(t) \rfloor  \}$. Notice that $J_n=\emptyset$ if
$|\T^*_{n-k+1}|=0$ and that, by convention, we then have $M_{J_n}(f)=0$. 
Theorem  \ref{th:wlln2}  implies that  $m^{-n} M_{\T^*_{n-k}} (f)$
converges  to $0$ in $L^2$  as $n$ goes to  $\infty$.   
  Recall $\cg$ is  the $\sigma$-field generated
by $(X_i, i \in \T)$. Since $J_n \subset \G^*_{n-k+1}$, we have
\[
\E[M_{J_n}(f)^2  | \mathcal{G}] = \sum_{i,j \in  \G^*_{n-k+1}}
f(X_i)f(X_j) \E[\ind_{\{i,j \in J_n  \}}|\T^*].
\]
Thanks to the definition of $\tilde{\Pi}$, we have for $i,j\in
\G_{n-k+1}$ 
\[
\ind_{\{i,j\in \G^*_{n-k+1}\}} \E[\ind_{\{i,j\in J_n\}}|\T^*]
=\ind_{\{i,j\in \G^*_{n-k+1}\}} (\ind_{\{i \neq
  j\}} \chi_2 + \ind_{\{i=j\}}\chi_1), 
\]
where, with $a=\lfloor
(m^kt-1)(m-1)^{-1}|\G^*_{n-k+1}| \rfloor$, 
\[
\chi_{1}= \frac{a}{|\G^*_{n-k+1}|}\quad\text{and}\quad 
\chi_2=\frac{a(a-1)}{|\G^*_{n-k+1}|(|\G^*_{n-k+1}|-1)}.
\]
Thus, we get 
\begin{align*}
   \E[M_{J_n}(f)^2  | \mathcal{G}] 
&= \chi_2  \sum_{i,j \in
  \G^*_{n-k+1}} f(X_i)f(X_j) + (\chi_1 -\chi_2) \sum_{i \in
  \G^*_{n-k+1}} f^2(X_i) \\
&=\chi_2 M_{\G^*_{n-k+1}}( f)^2 + (\chi_1
-\chi_2)M_{\G^*_{n-k+1}}(f^2)\\
&\leq  M_{\G^*_{n-k+1}}( f)^2 + M_{\G^*_{n-k+1}}(f^2),
\end{align*}
as $0\leq \chi_2\leq \chi_1\leq 1$. We have 
\begin{equation}
   \label{eq:majoL2snt}
\|m^{-n}M_{J_n}(f)^2  \|_{L^2}^2  \le
\|m^{-n}M_{\G^*_{n-k+1}}(f)\|_{L^2}^2 + m^{-n}
\|m^{-n}M_{\G^*_{n-k+1}}(f^2) \|_{L^1}.
\end{equation}
The first  term of the right hand-side  of \reff{eq:majoL2snt} converges
to $\langle \mu,f \rangle W=0$ as $n$ goes to infinity, thanks to Theorem
\ref{th:wlln1}. The same Theorem entails that
$\|m^{-n}M_{\G^*_{n-k+1}}(f^2) \|_{L^1}$ converges  to $\E[\langle \mu ,
f^2 \rangle W]$, and consequently the second term of the right hand-side
of \reff{eq:majoL2snt} also converges to $0$ as $n$ goes to infinity. 
We deduce that  the sequence $(m^{-n}M_{J_n}(f)^2, m\in \N^*)$
converges  to $0$ in $L^2$. 

Since   $m^{-n}   M^*_{\tau_n(t)}(f)=    m^{-n}    M_{\T^*_{n-k}}(f)   +
m^{-n}M_{J_n}(f) $,   the  sequence $(m^{-n}M^*_{\tau_n(t)
}(f), n \in \N^*)$ converges to $0$ in $L^2$.

Next, we consider the case $\langle \mu, f \rangle \neq 0$. We set $g=f-
\langle \mu, f \rangle$. Since $ m^{-n}M^*_{\tau_n(t)}(f)= m^{-n}M^*_{
  \tau_n(t) }(g)  + \langle \mu, f \rangle  m^{-n} \lfloor \tau_n(t)
\rfloor $, the Proposition  will be proved  as soon as we  check that $(m^{-n}
\lfloor \tau_n(t) \rfloor, n \in \N^*)$ converges to $m(m-1)^{-1} t W$
in $L^2$.

The case $t=0$ is obvious. For $t \in (0,1]$, there  exists a unique $k
\ge 1$  such that $m^{-k} < t \le m^{-k+1}$. 
We deduce from \reff{eq:snt}  that, for $1\leq k\leq n$,
\[
m^{-n}\tau_n(t) = (m-1)^{-1} 
\left(\frac{|\T^*_{n-k}|}{t_{n-k}}(m^{-k+1}-\frac{1}{m^n}) +
  \frac{|\G^*_{n-k+1}|}{m^{n-k+1}}(mt-m^{-k+1}) \right).
\]
Since both  $m^{-n}|\G^*_n|$ and $t_n^{-1}|\T^*_n|$ converges  to $W$ in
$L^2$,   we  finally  obtain   that  $m^{-n}\tau_n(t)$   converges  to
$m(m-1)^{-1}tW$ in $L^2$.
\end{proof}

We deduce from \reff{eq:snt} and \reff{eq:EZET}, that for $t\in (0,1]$,
$n\geq k$, where 
$k= \lfloor -\frac{\log(t)}{\log(m)}\rfloor +1$, we have 
\[
\E[\tau_n(t)]=t_{n-k} + (m^k t-1)(m-1)^{-1} m^{n-k+1}=(m^{n+1} t-1)(m-1)^{-1}.
\]
Thus,  Proposition  \ref{prop:snt}  implies  that  $(\E[\tau_n(t)]^{-1}
M^*_{  \tau_n(t) }(f),  n \in  \N^*)$ converges  to $\langle  \mu  , f
\rangle W$ in $L^2$ for all $t\in [0,1]$, which generalizes Theorem \ref{th:wlln2}.

In fact the convergence in Proposition \ref{prop:snt} is uniform in
$t$. 

\begin{cor}
\label{cor:wllunif}
  Let  $F$ satisfy (i)-(vi),  $f\in F$  s.t. $|f|\in  F$. We  set $R_n(t)=
  m^{-n}M^*_{ \tau_n(t)}(f)- \langle \mu , f \rangle m(m-1)^{-1}  W t$.
  The sequence  $(\sup_{t\in [0,1]} | R_n(t)|,n  \in \N^*)$ converges  to 0 in
  $L^2$.
\end{cor}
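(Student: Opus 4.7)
The strategy is a Glivenko--Cantelli type sandwich: upgrade the pointwise $L^2$ convergence of Proposition \ref{prop:snt} to a uniform one by exploiting monotonicity in $t$.

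First, I would reduce to the case $f\geq 0$. Since $F$ is a vector space containing both $f$ and $|f|$, the functions $f_+=(|f|+f)/2$ and $f_-=(|f|-f)/2$ belong to $F$, and by the triangle inequality $\sup_{t\in[0,1]}|R_n(t;f)|\leq \sup_t|R_n(t;f_+)|+\sup_t|R_n(t;f_-)|$, so it suffices to treat non-negative functions of $F$.

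Assume therefore $f\geq 0$. Writing $S_n(t)=m^{-n}M^*_{\tau_n(t)}(f)$ and $L(t)=\langle\mu,f\rangle\, m(m-1)^{-1}Wt$, both are non-decreasing in $t$: for $S_n$ because $\tau_n$ is continuous and non-decreasing on $[0,1]$ by \reff{eq:snt} and $\tilde\Pi$ preserves the genealogical order, so $\lfloor\tau_n(t)\rfloor$ is non-decreasing and the summands $f(X_{\tilde\Pi(i)})$ are non-negative; for $L$ because $\langle\mu,f\rangle\geq 0$ and $W\geq 0$. Fix an integer $K\geq 1$ and the partition $t_k=k/K$, $0\leq k\leq K$. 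For $t\in[t_{k-1},t_k]$, monotonicity gives
\[
S_n(t_{k-1})-L(t_k)\;\leq\; R_n(t)\;\leq\; S_n(t_k)-L(t_{k-1}),
\]
which, after adding and subtracting $L(t_{k-1})$ and $L(t_k)$ respectively, yields
\[
|R_n(t)|\;\leq\;\max\bigl(|R_n(t_{k-1})|,|R_n(t_k)|\bigr)+\bigl(L(t_k)-L(t_{k-1})\bigr)
\;\leq\;\max_{0\leq j\leq K}|R_n(t_j)|+\frac{\langle\mu,f\rangle\, m}{(m-1)K}\,W.
\]
Taking $L^2$ norms and using the trivial bound $\max_j|R_n(t_j)|\leq\sum_j|R_n(t_j)|$, I obtain
\[
\bigl\|\sup_{t\in[0,1]}|R_n(t)|\bigr\|_{L^2}\;\leq\;\sum_{k=0}^{K}\|R_n(t_k)\|_{L^2}+\frac{\langle\mu,f\rangle\, m}{(m-1)K}\,\|W\|_{L^2}.
\]
By Proposition \ref{prop:snt}, each of the finitely many terms $\|R_n(t_k)\|_{L^2}$ tends to $0$ as $n\to\infty$; hence
\[
\limsup_{n\to\infty}\bigl\|\sup_{t\in[0,1]}|R_n(t)|\bigr\|_{L^2}\;\leq\;\frac{\langle\mu,f\rangle\, m}{(m-1)K}\,\|W\|_{L^2}.
\]
Letting $K\to\infty$ gives the claim (recall $W\in L^2$ by \reff{eq:defW}).

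The only delicate point is verifying the monotonicity of $t\mapsto M^*_{\tau_n(t)}(f)$, which depends on the fact that $\tilde\Pi$ enumerates the $|\T^*|$ first living cells in an order compatible with the genealogy; this is built into the definition of $\tilde\Pi$ in \reff{eq:nfirst}. Once this and the reduction to $f\geq 0$ are in place, the argument is the standard Glivenko--Cantelli monotone-approximation trick, and the only limit input needed is the pointwise $L^2$ convergence already established in Proposition \ref{prop:snt}.
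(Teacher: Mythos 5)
Your proof is correct and follows essentially the same route as the paper's: the same reduction to non-negative $f$ via $f^{\pm}=(|f|\pm f)/2$, followed by the monotone partition (Dini/Glivenko--Cantelli) argument bounding $\sup_t|R_n(t)|$ by the values at finitely many partition points plus a $\langle\mu,f\rangle\, m(m-1)^{-1}W/K$ increment term, then letting $n\to\infty$ and $K\to\infty$. The paper's version is just slightly more compact (it uses $R_n(0)=0$ and sums directly), but the content is identical.
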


\begin{proof}
  Let   $f\in  F$   s.t.  $|f|\in   F$.  We   set   $f^+=\max(0,f)$  and
  $f^-=\max(0,-f)$.  As $F$  is a  vector  space, we  get that  $f^+=(f+
  |f|)/2$ and $f^-=f^+-f$ belong to $F$. Notice that $|R_n(t)|\leq
  |R_n^+(t)|+ |R_n^-(t)|$, where $R_n^{\delta}(t)=m^{-n}M^*_{ \tau_n(t)}(f^{\delta})- \langle \mu , f^{\delta} \rangle m(m-1)^{-1}  W t$ for $\delta \in \{+,-\}$. So it is enough to prove the Corollary for
  $f$ non-negative. As $t\mapsto m^{-n}M^*_{ \tau_n(t)}(f)$ and
  $t\mapsto \langle \mu , f \rangle m(m-1)^{-1} t W$ are 
  non-decreasing and $R_n(0)=0$, we get that for $N\geq 1$, 
\[
\sup_{t\in [0,1]} |R_n(t)|\leq  \inv{N} \langle \mu, f  \rangle
m(m-1)^{-1} W
+\sum_{k=1}^N |R_n(k/N)| .
\]
Now, use that $W\in L^2$ and that $R_n(t)$ goes to $0$ in $L^2$ for all
$t \in [0,1]$ to get the result. 
\end{proof}

We have a version of Proposition \ref{prop:snt} and Corollary
\ref{cor:wllunif} for functions defined on $\bar S^3$. 

\begin{prop}
   \label{prop:wllS3tn}
  Let  $F$ satisfy  (i)-(vi),  $g\in \cb(\bar S^3)$ s.t. $P^* g$ and
  $P^* (g^2) $ exist
  and  belong to $F$. Let $t \in  [0,1]$. The
  sequence  $(m^{-n}M^*_{ \tau_n(t)}(g)$, $n  \in \N^*)$
  converges to $\langle \mu , P^* g \rangle m(m-1)^{-1} t W$ in $L^2$.
  
  Furthermore, if  $P^*  |g|$ and $P^* (g|g|)$
  also belong  to $F$  then $(\sup_{t\in [0,1]}  | R_n(t)|,n  \in \N^*)$
  converges  to 0  in $L^2$,  where $R_n(t)=  m^{-n}M^*_{ \tau_n(t)}(g)-
  \langle \mu , P^* g \rangle m(m-1)^{-1} Wt$, for $t\in [0,1]$.
\end{prop}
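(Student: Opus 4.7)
The plan is to reduce to Proposition \ref{prop:snt} via the decomposition $g(\Delta_i) = P^*g(X_i) + (g(\Delta_i)-P^*g(X_i))$ applied to every $i \in I^*_{\lfloor \tau_n(t)\rfloor}$, which writes
\[
m^{-n}M^*_{\tau_n(t)}(g) = m^{-n}M^*_{\tau_n(t)}(P^*g) + m^{-n}D_n(t),
\]
where $D_n(t) = \sum_{i \in I^*_{\lfloor \tau_n(t)\rfloor}} Z_i$ and $Z_i := g(\Delta_i) - P^*g(X_i)$. Since $P^*g \in F$, the first summand converges in $L^2$ to $\langle \mu, P^*g \rangle m(m-1)^{-1}tW$ by direct application of Proposition \ref{prop:snt}, so everything reduces to showing $m^{-n}D_n(t) \to 0$ in $L^2$.

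The key observation is the martingale-difference structure: $Z_i$ is $\sigma(X_j, j\in \T_{|i|+1})$-measurable with $\E[Z_i \mid \sigma(X_j, j\in \T_{|i|})]=0$ by the BMC property, while the $Z_i$ for distinct $i$ in the same generation are conditionally uncorrelated. Mirroring the proof of Proposition \ref{prop:snt}, I would fix $k\geq 1$ with $m^{-k}<t\leq m^{-k+1}$ and split $D_n(t) = A_n + B_n$, with $A_n = \sum_{r=0}^{n-k}\sum_{i\in \G^*_r} Z_i$ the contribution of complete generations and $B_n = \sum_{i \in J_n} Z_i$ that of the partial generation $J_n\subset \G^*_{n-k+1}$. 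Cross-generation orthogonality combined with Lemma \ref{lem:fYn} applied to $P^*(g^2)-(P^*g)^2 \in F$ yields
\[
\E[A_n^2] = \sum_{r=0}^{n-k} m^r \, \E\bigl[(P^*(g^2)-(P^*g)^2)(Y_r)\bigr] \leq C\, m^{n-k+1},
\]
with $C<\infty$ by properties $(ii),(iv),(v),(vi)$. For $B_n$, I would use that the conditional law of $\Pi^*_{n-k+1}$ given $\cg$ depends only on $\G^*_{n-k+1}$, so conditionally on $\sigma(X_j, j\in \T_{n-k+1})$ the set $J_n$ is independent of the daughters $(X_{i0},X_{i1})_{i \in \G_{n-k+1}}$; conditioning exactly as in Proposition \ref{prop:snt} with the scalars $\chi_1,\chi_2\in[0,1]$ and retaining only the diagonal term (off-diagonal terms vanish by conditional orthogonality of the $Z_i$) gives $\E[B_n^2]\leq C\, m^{n-k+1}$, and the same conditional computation forces $\E[A_n B_n] = 0$. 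Altogether $m^{-2n}\E[D_n(t)^2] = O(m^{-n-k+1}) \to 0$.

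For the uniform statement, I would write $g = g^+ - g^-$ with $g^\pm\geq 0$, and use $P^*g^\pm = (P^*|g|\pm P^*g)/2$ together with $(g^\pm)^2 = (g^2\pm g|g|)/2$ to show that, under the extra hypothesis $P^*|g|, P^*(g|g|)\in F$, both $P^*g^\pm$ and $P^*((g^\pm)^2)$ belong to $F$; the pointwise statement just established then applies to $g^+$ and $g^-$ separately. Because $t \mapsto m^{-n}M^*_{\tau_n(t)}(g^\pm)$ and $t \mapsto \langle \mu,P^*g^\pm\rangle m(m-1)^{-1}Wt$ are both non-decreasing (as $g^\pm \geq 0$ and $\tau_n$ is non-decreasing), the monotonicity trick of Corollary \ref{cor:wllunif} carries over verbatim: bound the supremum by a subdivision of $[0,1]$ into $N$ pieces plus the $L^2$ oscillations on the nodes of the subdivision, and let $N\to\infty$. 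The main obstacle I anticipate is the control of the boundary term $B_n$: one must carefully verify that the additional randomness introduced by $\Pi^*_{n-k+1}$ does not destroy the conditional cancellation $\E[Z_i \mid \sigma(X_j,j\in \T_{|i|})] = 0$, which is precisely what the conditional-independence property of the uniform permutation guarantees.
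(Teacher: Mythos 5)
Your proof is correct and follows essentially the same route as the paper, which simply notes that the first part "is similar to the proof of Theorem \ref{th:wlln4}" (i.e.\ the orthogonal decomposition $g(\Delta_i)=P^*g(X_i)+Z_i$ with the second moment of the remainder controlled by $P^*(g^2)-(P^*g)^2$ via Lemma \ref{lem:fYn}) and that the second part "is similar to the proof of Corollary \ref{cor:wllunif}" (the $g=g^+-g^-$ monotonicity argument, whose applicability you correctly justify from the extra hypotheses $P^*|g|,P^*(g|g|)\in F$). You have merely filled in the details the paper leaves to the reader, including the genuine subtlety about the conditional independence of $\Pi^*_{n-k+1}$ on the partial generation.
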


\begin{proof}
   The proof of the first part is similar to the proof of
   Theorem \ref{th:wlln4}. The proof of the second part is similar to
   the proof of Corollary \ref{cor:wllunif}.
\end{proof}

\section{Fluctuations}
\label{sec:tcl}

Recall \reff{eq:defMJ}.   For any real  $x \ge 0$, using  notations from
the previous  Section, we will  write $M_x^*(f)$ for  $M_{I^*_{\lfloor x
    \rfloor}}(f)$, with the  convention that $M_0^*(f)=0$, where $I^*_n$
is  defined by  \reff{eq:nfirst}.   We  shall prove  a central  limit
theorem  for the  sequence $(M_n^*(f),  n \ge  1)$, based  on martingale
theorems.

We  set $\mathcal{H}_n  =  \sigma(\Delta_{\tilde{\Pi}(k)}, 1  \le k  \le
n\wedge  |\T^*|) \vee  \sigma(\tilde{\Pi}(k),  1 \le  k  \le n+1)$  for
$n\geq 1$,  $\mathcal{H}_0 = \sigma(X_\emptyset)$  and $\ch=(\ch_n, n\in
\N)$  for  the  corresponding   filtration. With  the  convention  that
$X_{\partial_{\T}} =  \partial$, we notice  that $X_{\tilde{\Pi}(n+1)}$ is
$\mathcal{H}_n$-measurable.  Indeed,  given $(\tilde{\Pi}(k),1\leq k\leq
n+1)$, if $\tilde{\Pi}(n+1)\neq
\partial_\T$, we have  $\tilde{\Pi}(n+1)=\tilde{\Pi}(j)i$ for some $j\in
\{1,     \ldots,     n\}$     and     $i\in     \{0,1\}$,     and     as
$\Delta_{\tilde{\Pi}(j)}=(X_{\tilde{\Pi}(j)},        X_{\tilde{\Pi}(j)0},
X_{\tilde{\Pi}(j)1}) \in  \ch_n$, we deduce  that $X_{\tilde{\Pi}(n+1)}$
is  $\mathcal{H}_n$-measurable.   In  particular, as  $\{|\T^*|\geq
n+1\}\in  \ch_n$,  this implies  that  $\ind_{\{|\T^*|\geq n+1\}  }
\E[f(\Delta_{\tilde{\Pi}(n+1)}) | \mathcal{H}_n]=\ind_{\{|\T^*|\geq
  n+1\}  } P^*f(X_{\tilde{\Pi}(n+1)})$,  for any  $f \in  \cb(\bar S^3)$
such  that  $P^*f$  is  well  defined. If  in  addition  $P^*f=0$,  then
$(M^*_n(f), n\in \N)$ is an $\ch$-martingale.

We  shall first  recall  a slightly  weaker  version of  Theorem 4.3  from
\cite{r:cltmrct}   on   martingale    convergence.   (Theorem   4.3   from
\cite{r:cltmrct} is stated for filtrations which may vary with $n$.)

For $u\in \R^d$, we denote by $u'$ its transpose. Let $\ch=(\ch_i, i\in
\N)$ be a filtration. If $(D_i, i \in \N)$ is a sequence of vector
valued random
variables $\ch$-adapted  and such that 
$\E[D_{i+1}|\ch_i]=0$ for all $i \in \N$, then $(D_i, i \in \N)$ is called 
an $\ch$-martingale difference.

\begin{theo}[Theorem 4.3 from  \cite{r:cltmrct}]
\label{theo:root}
Let $\ch=(\ch_i,  i\in \N)$ be a  filtration. For all $n  \in \N^*$, let
$(D_{n,i}=(D_{n,i}^{(1)},  \ldots  ,D_{n,i}^{(d)}),  i\in  \N)  $  be  a
sequence  of  $\R^d$-valued   random  vectors  and  an  $\ch$-martingale
difference.   For each  $n\in \N$,  let $(\tau_n(t),  t\in [0,1])$  be a
non-decreasing  càdlàg function  s.t.  $\tau_n(t)$  is  a $\ch$-stopping
time  for  all   $t\in  [0,1]$.   Let  $(\Tau(t),  t\in   [0,1])$  be  a
$\R^{d\times d}$-valued continuous, possibly random, function. We assume
the following two conditions hold:
\begin{enumerate}
   \item  Convergence   of  the  timescales.  For all  $t\in [0,1]$, we
     have the following convergence in probability:
\[
 \sum_{i=1}^{\tau_n(t)}  \E\left[D_{n,i} (D_{n,i})'|\ch_{i-1}\right]
   \;\xrightarrow[n\rightarrow \infty ]{\P}\;\Tau(t).
\]
   \item Lindeberg condition.  For all $\varepsilon>0$, $1\leq \ell\leq d$,
     we have the following convergence in probability:
\[
 \sum_{i=1}^{\tau_n(1)} \E\left[(D_{n,i}^{(\ell)})^2\ind_{\{|D_{n,i}^{(\ell)}|>
       \varepsilon\}}|\ch_{i-1}\right]
   \;\xrightarrow[n\rightarrow \infty ]{\P}\; 0. 
\]
\end{enumerate}
Then $(\sum_{i=1}^{\lfloor \tau_n(\cdot) \rfloor}D_{n,i}, n\in \N^*)$ converges in
distribution to $B_\Tau$  in  the  Skorohod  space  $\D([0,1])^d$  of 
$\R^d$-valued  càdlàg functions defined on $[0,1]$, where, conditionally on $\Tau$,
$(B_\Tau(t), t\geq 0)$ is a Gaussian process with independent
increments and $B_\Tau(t)$ has zero mean and variance $\Tau(t)$. 

Furthermore the convergence is stable: if $(Y_n, n\in \N)$ converges in
probability to $Y$, then $((\sum_{i=1}^{\lfloor \tau_n(\cdot) \rfloor}D_{n,i} ,
Y_n) , n\in \N)$ converges in
distribution to
$(B_\Tau, Y)$, where $B_\Tau $ is conditionally on $(\Tau,Y)$
distributed as $B_\Tau $ conditionally on $\Tau$, and the
distribution of $(\Tau ,Y)$ is determined by the following
convergence 
\[
\Big(\sum_{i=1}^{\tau_n(\cdot)}  \E\left[D_{n,i} (D_{n,i})'|\ch_{i-1}\right], Y_n\Big)
   \;\xrightarrow[n\rightarrow \infty ]{\P}\;(\Tau,Y).
\]
\end{theo}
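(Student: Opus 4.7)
The plan is to prove this functional martingale CLT in three analytic steps --- reduction, finite-dimensional convergence, tightness --- followed by a separate characteristic-function argument for stable convergence.

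First, I would reduce to the scalar case via the Cram\'er--Wold device. For fixed $u\in \R^d$, the array $(u' D_{n,i}, i\in \N)$ is still an $\ch$-martingale difference, and Condition~1 directly gives $\sum_{i=1}^{\tau_n(t)}\E[(u' D_{n,i})^2 |\ch_{i-1}] \to u'\Tau(t) u$ in probability. The scalar Lindeberg condition follows from the coordinate one via $|u'D_{n,i}|^2 \leq d\norm{u}^2 \max_{\ell} (D_{n,i}^{(\ell)})^2$. Joint convergence at several times $t_1,\ldots,t_k$ is reached the same way by applying Cram\'er--Wold in $\R^{dk}$.

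Second, I would establish convergence of finite-dimensional distributions. Fix $0\leq t_1 < \cdots < t_k \leq 1$ and consider the block increments $S_{n,j} = \sum_{i=\tau_n(t_{j-1})+1}^{\tau_n(t_j)} u' D_{n,i}$: these form a martingale-difference array in $j$ whose predictable quadratic variations converge in probability to $u'(\Tau(t_j)-\Tau(t_{j-1}))u$. Applying a classical martingale CLT (Hall--Heyde Theorem~3.2, or McLeish) conditionally on $\Tau$ yields the limit as a Gaussian vector with the right independent increments, matching the marginals of $B_\Tau$. Tightness in $\D([0,1])^d$ then follows from Aldous's criterion: for $\ch$-stopping times $\sigma_n\in[0,1]$ and $\delta_n\downarrow 0$, the conditional second moment of $\sum_{i=\tau_n(\sigma_n)+1}^{\tau_n((\sigma_n+\delta_n)\wedge 1)} u' D_{n,i}$ equals the quadratic-variation increment between those two times, which vanishes by Condition~1 and the continuity of $\Tau$.

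Third, to upgrade to stable convergence, I would use the conditional characteristic function technique. For bounded continuous $\psi$ and $u_1,\ldots,u_k \in \R^d$, one expands
\[
\E\Big[\expp{i\sum_j u_j' \sum_{i=1}^{\tau_n(t_j)} D_{n,i}}\,\psi(Y_n)\Big]
\]
via McLeish's telescoping trick together with a Taylor expansion of the exponential. Using the joint convergence $(\sum\E[D_{n,i}D_{n,i}'|\ch_{i-1}], Y_n) \to (\Tau, Y)$ in probability supplied by the hypothesis, this expectation converges to $\E[\psi(Y)\,\expp{-\tfrac{1}{2} \sum_{j,j'} u_j' \Tau(t_j\wedge t_{j'}) u_{j'}}]$, which is exactly the characteristic function of $(B_\Tau, Y)$ under the asserted conditional independence of $B_\Tau$ and $Y$ given $\Tau$.

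The main obstacle is that the limit $\Tau$ and the timescales $\tau_n(t)$ are simultaneously random while living in the same filtration $\ch$, which prevents a naive reduction to deterministic-variance CLTs. The cleanest way around this is to work under a regular conditional probability given $\Tau$, reducing matters to the deterministic case where classical martingale CLTs apply directly, and then to verify carefully that Aldous's criterion can be invoked through the random timescale $\tau_n$. Once these measurability issues are settled, the three ingredients combine to yield the stated stable functional CLT.
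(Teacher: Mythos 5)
The paper does not prove this statement: it is quoted, in a slightly weakened form, from Rootz\'en \cite{r:cltmrct}, whose proof proceeds (as the title of that reference indicates) by a random change of time --- one time-changes the martingale so that its predictable quadratic variation becomes asymptotically deterministic, applies the functional CLT for martingales with deterministic limiting variance, and then inverts the time change. There is therefore no internal proof to compare yours against; I can only assess your outline on its own terms.

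Your architecture (Cram\'er--Wold, finite-dimensional convergence, Aldous tightness, conditional characteristic functions for stability) is the standard alternative route and can be made to work, but the step you single out as ``the cleanest way around'' the randomness of $\Tau$ is precisely the one that fails: you cannot work under a regular conditional probability given $\Tau$ and then invoke a deterministic-variance martingale CLT, because $\Tau$ is measurable only with respect to the terminal $\sigma$-field (it is a limit in probability of $\ch$-adapted quantities), and conditioning on it destroys the martingale-difference property --- under $\P(\cdot\,|\,\Tau)$ one no longer has $\E[D_{n,i}\,|\,\ch_{i-1}]=0$. This is the classical pitfall with mixed-normal limits, and it is exactly why Hall--Heyde-type theorems with random limiting variance are proved by direct telescoping of conditional characteristic functions rather than by conditioning on the limit. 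Your third step (McLeish's expansion carried out jointly with $\psi(Y_n)$, at several times $t_1,\dots,t_k$ simultaneously) is in fact the viable argument and already delivers the finite-dimensional stable convergence, which makes the ``conditional'' CLT of your second step both unnecessary and unjustified as written. Two smaller points are glossed over: Aldous's criterion requires the quadratic-variation increments over $[\sigma_n,\sigma_n+\delta_n]$ to be small in $L^1$, not merely in probability, which requires a truncation via the Lindeberg condition; and the uniformity in $t$ of the convergence in Condition 1 (needed both for tightness and to identify the covariance $\Tau(t_j\wedge t_{j'})$ in the limit) must be extracted from the pointwise convergence using monotonicity of $t\mapsto\sum_{i\le\tau_n(t)}\E[D_{n,i}D_{n,i}'|\ch_{i-1}]$ and continuity of $\Tau$.
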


For the sake of simplicity, we will write $P^*h^k$ for $P^*(h^k)$, and
if $h=(h_1, \ldots, h_d)$ is an $\R^d$ valued function, we will write
$P^*h$ for $(P^* h_1, \ldots, P^* h_d)$ and $\langle \mu,h  \rangle$ for
$(\langle \mu, h_1 \rangle, \ldots, \langle \mu, h_d \rangle)$.

\begin{theorem}
\label{theo:stable}
Let  $(X_i,  i\in  \T^*)$  be  a  super-critical  spatially  homogeneous
$P^*$-BMC on  a GW tree and  $\tau_n$ be defined  by \reff{eq:snt}.  Let
$F$ satisfy (i)-(vi).  Let $d\geq  1$, $d'\geq 1$, $f=(f_1, \ldots, f_d)
\in  \cb(\bar S^3)^d,  g=(g_1, \ldots,  g_{d'}) \in  \cb(\bar S^3)^{d'}$
such  that  $P^* f_\ell^k$,  exist  and belong  to  $F$  for all  $1\leq
\ell\leq  d$ and  $1\leq k\leq  4$, $P^*  g_\ell$, $P^*  |g_\ell|$, $P^*
g_\ell^2$  and $P^*  g_\ell|g_\ell|$ exist  and  belong to  $F$ for  all
$1\leq  \ell\leq d'$. Let  $\Sigma$ be  a square  root of  the symmetric
positive  matrix $m(m-1)^{-1}\langle  \mu,  P^* (ff')  -(P^* f)(P^*  f)'
\rangle$ and $\gamma=m(m-1)^{-1} \langle \mu ,P^* g \rangle$.

  Then,  the sequence  $(m^{-n/2}  M^*_{\tau_n(\cdot)} (f-P^*f),  m^{-n}
  M^*_{\tau_n(\cdot)}(g))$  converges in  distribution  in the  Skorohod
  space $\D([0,1], \R^{d+d'})$  of $\R^{d+d'}\!$-valued càdlàg functions
  defined  on $[0,1]$,  to the  process  $(\Sigma \sqrt{W}  B, \gamma  W
  h_0)$, where  $B$ is a $d$-dimensional Brownian  motion independent of
  $W$,  defined by \reff{eq:defW},  and $h_0$  is the  identity function
  $t\mapsto t$.
\end{theorem}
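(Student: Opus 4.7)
The plan is to apply the stable martingale CLT (Theorem \ref{theo:root}) with the $\R^d$-valued triangular array
$$D_{n,i}=m^{-n/2}\bigl(f-P^*f\bigr)\bigl(\Delta_{\tilde\Pi(i)}\bigr)\ind_{\{i\le|\T^*|\}}$$
adapted to the filtration $\ch$. The identity $\sum_{i=1}^{\lfloor\tau_n(t)\rfloor}D_{n,i}=m^{-n/2}M^*_{\tau_n(t)}(f-P^*f)$ is immediate, and the $\ch$-martingale difference property follows from the relation $\ind_{\{|\T^*|\ge i+1\}}\E[h(\Delta_{\tilde\Pi(i+1)})|\ch_i]=\ind_{\{|\T^*|\ge i+1\}}P^*h(X_{\tilde\Pi(i+1)})$ recorded just above Theorem \ref{theo:root}. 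Note that $\tau_n(t)$ is a $\ch$-stopping time since it depends only on the sizes $|\G^*_k|,|\T^*_k|$ for $k\le n$, which are $\ch$-measurable via the enumeration $\tilde\Pi$.

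First I would verify the convergence of the predictable quadratic variation. On $\{i\le|\T^*|\}$ one has
$\E[D_{n,i}D_{n,i}'|\ch_{i-1}]=m^{-n}V(X_{\tilde\Pi(i)})$ with $V:=P^*(ff')-(P^*f)(P^*f)'$, so $\sum_{i=1}^{\tau_n(t)}\E[D_{n,i}D_{n,i}'|\ch_{i-1}]=m^{-n}M^*_{\tau_n(t)}(V)$. The hypothesis $P^*f_\ell^k\in F$ for $k\in\{1,\ldots,4\}$, properties $(i)$--$(ii)$ of $F$, and polarization identities such as $2P^*(f_\ell f_k)=P^*((f_\ell+f_k)^2)-P^*f_\ell^2-P^*f_k^2$ show that every entry of $V$ belongs to $F$. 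Proposition \ref{prop:snt} applied entrywise then yields
$$\sum_{i=1}^{\tau_n(t)}\E[D_{n,i}D_{n,i}'|\ch_{i-1}]\;\xrightarrow[n\to\infty]{\P}\;m(m-1)^{-1}\langle\mu,V\rangle Wt=\Sigma\Sigma'Wt=:\Tau(t).$$

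For the Lindeberg condition I would use a fourth-moment bound via Chebyshev,
$$\sum_{i=1}^{\tau_n(1)}\E\bigl[(D_{n,i}^{(\ell)})^2\ind_{\{|D_{n,i}^{(\ell)}|>\varepsilon\}}\big|\ch_{i-1}\bigr]\le\varepsilon^{-2}m^{-2n}M^*_{\tau_n(1)}\bigl(P^*((f_\ell-P^*f_\ell)^4)\bigr),$$
expanding $(f_\ell-P^*f_\ell)^4$ and using the $P^*f_\ell^k\in F$ hypothesis together with $(ii)$ to obtain $P^*((f_\ell-P^*f_\ell)^4)\in F$. Proposition \ref{prop:snt} then bounds $m^{-n}M^*_{\tau_n(1)}$ of this function in $L^2$, and the extra factor $m^{-n}$ drives the whole right-hand side to zero in probability.

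For the joint statement I would feed the second component into the stable-convergence clause of Theorem \ref{theo:root}: Proposition \ref{prop:wllS3tn} gives $Y_n(\cdot):=m^{-n}M^*_{\tau_n(\cdot)}(g)\to\gamma Wh_0$ uniformly in $L^2$, hence in probability in $\D([0,1],\R^{d'})$. Pairing this with the quadratic-variation convergence, the stable-convergence statement delivers $(m^{-n/2}M^*_{\tau_n(\cdot)}(f-P^*f),Y_n)\to(B_\Tau,\gamma Wh_0)$ in distribution in $\D([0,1],\R^{d+d'})$, where $B_\Tau$ is, conditionally on $(\Tau,Y)=(\Sigma\Sigma'Wh_0,\gamma Wh_0)$, a centered Gaussian process with independent increments and variance $\Sigma\Sigma'Wt$ at time $t$. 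This law is precisely that of $\Sigma\sqrt{W}B$ with $B$ a standard $d$-dimensional Brownian motion independent of $W$, giving the claimed limit. The main obstacle is the functional-analytic bookkeeping needed to certify that $V$ and $P^*((f_\ell-P^*f_\ell)^4)$ lie in $F$; once this is done, the rest is a clean assembly of Propositions \ref{prop:snt}, \ref{prop:wllS3tn} and Theorem \ref{theo:root}.
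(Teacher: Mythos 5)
Your proposal is correct and follows essentially the same route as the paper: the same martingale-difference array $D_{n,i}$, the same quadratic-variation computation handled by Proposition \ref{prop:snt}, the same fourth-moment Chebyshev bound for Lindeberg, and the same use of Proposition \ref{prop:wllS3tn} together with the stable-convergence clause of Theorem \ref{theo:root}. The only difference is that you spell out slightly more of the bookkeeping (polarization for the entries of $V$, measurability of $\tau_n(t)$) than the paper does.
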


\begin{proof}
  Notice   that  $\tau_n$   defined   by  \reff{eq:snt} is a non-decreasing
  continuous function s.t.  $\tau_n(t)$ is a $\ch$-stopping time for all
  $t\in [0,1]$. We set for all $n, i \in  \N^*$, 
\[
D_{n,i}=m^{-n/2} \left( f(\Delta_{\tilde \Pi(i)}) -
  P^*f(X_{\tilde \Pi (i)})\right)\ind_{\{i\leq |\T^*|\}},
\]
so that $(D_{n,i}, i\in  \N)$ is an $\ch$-martingale difference. Notice the matrix $\langle \mu, P^* ff' -(P^* f)(P^*
  f)' \rangle$ is indeed symmetric and positive, so that $\Sigma$ is well
  defined. 

Notice that 
\[
\E\left[D_{n,i}(D_{n,i})'|\ch_{i-1}\right]= m^{-n} \left(P^* (ff')(X_{\tilde
  \Pi(i)}) -  (P^* f)(X_{\tilde
  \Pi(i)})(P^*  f)'(X_{\tilde  \Pi(i)})\right)   \ind_{\{i\leq |\T^*|\}}.
\]
The convergence  of the timescales condition  of Theorem \ref{theo:root}
with $\Tau(t)=\Sigma^2 W t$, is then a direct application of Proposition
\ref{prop:snt}

For $1\leq \ell\leq d$, we have 
\[
\E\left[(D_{n,i}^{(\ell)})^2\ind_{\{|D_{n,i}^{(\ell)}|>
       \varepsilon\}}|\ch_{i-1}\right]
\leq  \varepsilon^{-2} \E\left[(D_{n,i}^{(\ell)})^4|\ch_{i-1}\right]=
\varepsilon^{-2}  m^{-2n} P^* (f_\ell - P^* 
   f_\ell)^4 (X_{\tilde 
  \Pi(i)})\ind_{\{i\leq |\T^*|\}}. 
\]
The Lindeberg condition  of Theorem \ref{theo:root}
 is then  a direct  application of Proposition \ref{prop:snt}

Notice the  second part  of Proposition \ref{prop:wllS3tn}.  implies the
convergence of $Y_n=m^{-n} M^*_{\tau_n(\cdot)}(g)$  to $\gamma W h_0$ in
probability in  the Skorohod space. 
We then deduce the result from Theorem \ref{theo:root}. 
\end{proof}

The following result is an immediate consequence of  Theorem
\ref{theo:stable}.  

\begin{cor}
  Let  $(X_i,  i\in \T^*)$  be  a  super-critical spatially  homogeneous
  $P^*$-BMC  on a  GW  tree. Let  $F$  satisfy (i)-(vi).   Let  $ f  \in
  \cb(\bar S^3)$ such  that $P^* f^k$ exists and belongs  to $F$ for all
  $1\leq  k\leq   4$.   Let  $\sigma^2=\langle  \mu,  P^*   f^2  -  (P^*
  f)^2\rangle $.

Then we have the following convergence in distribution: 
\[
\ind_{\{|\G^*_n|>0\}} |\T^*_n|^{-1/2} \sum_{i\in \T^*_n}
f(\Delta_i) - P^* f(X_i) 
   \;\xrightarrow[n\rightarrow \infty ]{\text{(d)}}\;
\ind_{\{W\neq 0\}} \sigma G,
\]
where $G$ is a Gaussian random variable with
mean zero and variance~$1$ independent of $W$, which is defined by
\reff{eq:defW}.    
\end{cor}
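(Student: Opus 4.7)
The plan is to derive the Corollary directly from Theorem \ref{theo:stable}, evaluated at the endpoint $t=1$, and then use a Slutsky-type rescaling argument. The whole point of including a vector $g$ in Theorem \ref{theo:stable} is to make such rescalings possible via the stability of the convergence.

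First, I would identify $\tau_n(1)$. Plugging $t=1=m^0$ into \reff{eq:snt} (so that $k=1$), one obtains
\[
\tau_n(1) = |\T^*_{n-1}| + (m-1)(m-1)^{-1}|\G^*_n| = |\T^*_n|,
\]
which is already an integer, and since $|\T^*_n|\leq |\T^*|$ the ordering $\tilde\Pi$ visits exactly the cells of $\T^*_n$ by time $|\T^*_n|$, hence $I^*_{\lfloor \tau_n(1)\rfloor}=\T^*_n$ and $M^*_{\tau_n(1)}(f-P^*f)=\sum_{i\in\T^*_n}\bigl(f(\Delta_i)-P^*f(X_i)\bigr)$.

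Second, I would apply Theorem \ref{theo:stable} with $d=1$, $d'=1$, the given $f$, and $g\equiv 1$, which trivially satisfies the hypotheses on $g$ since constants belong to $F$ by property $(i)$. With this choice, $\Sigma^2 = m(m-1)^{-1}\sigma^2$, $\gamma = m(m-1)^{-1}$, and $M^*_{\tau_n(1)}(g)=|\T^*_n|$. Evaluating the functional convergence at $t=1$ gives the joint distributional limit
\[
\Bigl(m^{-n/2}\sum_{i\in\T^*_n}\bigl(f(\Delta_i)-P^*f(X_i)\bigr),\; m^{-n}|\T^*_n|\Bigr)
\;\xrightarrow[n\to\infty]{(d)}\;
\Bigl(\sigma\sqrt{m/(m-1)}\sqrt{W}\,G,\;m(m-1)^{-1}W\Bigr),
\]
with $G\sim\mathcal{N}(0,1)$ independent of $W$.

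Third, I would rewrite, on the event $\{|\G^*_n|>0\}$ (which forces $|\T^*_n|>0$),
\[
|\T^*_n|^{-1/2}\sum_{i\in\T^*_n}\bigl(f(\Delta_i)-P^*f(X_i)\bigr)
= \bigl(m^{-n}|\T^*_n|\bigr)^{-1/2}\cdot m^{-n/2}\sum_{i\in\T^*_n}\bigl(f(\Delta_i)-P^*f(X_i)\bigr).
\]
The denominator tends in probability to $\sqrt{m(m-1)^{-1}W}$ (this is already contained in Theorem \ref{th:wlln2}, and is part of the joint convergence above), and the Gaussian limit in the numerator carries an independent factor $\sqrt{W}\,G$. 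On the non-extinction event $\{W\neq 0\}$ the continuous mapping theorem then produces the limit $\sigma G$, after cancellation of the factors $\sqrt{W}$ and $\sqrt{m/(m-1)}$. On the extinction event $\{W=0\}$ the process dies out, so $\ind_{\{|\G^*_n|>0\}}=0$ eventually, and both sides vanish. Since $\ind_{\{|\G^*_n|>0\}}\to\ind_{\{W\neq 0\}}$ almost surely, the indicators on the two sides of the desired statement match up.

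The main obstacle, mild but essential, is that the numerator and denominator in the ratio above are not independent: both depend on $W$. A naïve Slutsky argument requires one of the two factors to converge to a constant, which is not the case here. This is exactly the reason one needs the stable version of the martingale central limit theorem (Theorem \ref{theo:root}) underlying Theorem \ref{theo:stable}: it provides the joint convergence of the martingale and of the random timescale, so that the random factors $\sqrt{W}$ in the numerator and $\sqrt{W}$ in the denominator cancel correctly on $\{W\neq 0\}$ to yield a standard Gaussian $G$ that is independent of $W$.
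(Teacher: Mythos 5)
Your proof is correct and follows essentially the same route as the paper: identify $\tau_n(1)=|\T^*_n|$ so that $M^*_{\tau_n(1)}(f-P^*f)$ is the sum in question and $|\T^*_n|=M^*_{\tau_n(1)}(\ind)$, invoke the stable (joint) convergence of Theorem \ref{theo:stable} with $g\equiv 1$ at the marginal $t=1$, and cancel the random factor $\sqrt{W}$ on $\{W\neq 0\}$ by the continuous mapping theorem. The paper states this in two lines; you have correctly supplied the details, including the key observation that stability, not plain Slutsky, is what justifies dividing by the $W$-dependent normalization.
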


\begin{proof}
  Notice  that   $  \sum_{i\in  \T^*_n}  f(\Delta_i)  -   P^*  f(X_i)  =
  M^*_{\tau_n(1)}(f)-M^*_{\tau_n(1)}(P^*f)$,         $         |\T^*_n|=
  M^*_{\tau_n(1)}(\ind)$ and that $\ind_{\{|\G^*_n|>0\}}$ converges a.s.
  to $\ind_{\{W\neq 0\}}$. Then, to conclude, use the stable convergence
  of Theorem \ref{theo:stable}, and the  fact that the marginals at time
  1 converge since the limit is continuous.
\end{proof}

The next result gives that the fluctuations over each generation are
asymptotically independent. 

\begin{cor}
     Let  $(X_i,  i\in \T^*)$  be  a  super-critical spatially  homogeneous
  $P^*$-BMC  on a  GW  tree. Let $F$  satisfy (i)-(vi).  Let  $d\geq 1$,   $f_1, \ldots,
  f_d  \in  \cb(\bar  S^3)$ such  that $P^*  f_\ell^k$
    exist and
  belong to $F$ for all $1\leq  \ell\leq d$ and $1\leq k\leq 4$. 
Let $\sigma_\ell^2=\langle \mu,   P^* f_\ell^2 - (P^* f_\ell)^2\rangle $
for  $1\leq  \ell\leq d$. 

We set for $f\in \cb(\bar S^3)$
\[
N_n(f)=|\G^*_n|^{-1/2} (M_{\G^*_n}(f-P^*f)).
\]
Then we have the following convergence in distribution: 
\[
\left(N_n(f_1), \ldots, N_{n-d+1}(f_d)\right)\ind_{\{|\G^*_n|>0\}}
   \;\xrightarrow[n\rightarrow \infty ]{\text{(d)}}\;
\ind_{\{W\neq 0\}} (\sigma_1 G_1, \ldots, \sigma_d G_d),
\]
where $G_1, \ldots,  G_d$ are independent Gaussian random variables with
mean zero and variance~$1$ and are independent of $W$, which is defined by
\reff{eq:defW}.    
\end{cor}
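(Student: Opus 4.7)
My plan is to deduce this corollary from Theorem \ref{theo:stable} applied to the vector-valued function $f=(f_1,\ldots,f_d)$ together with the scalar function $g=\ind$. The crucial observation is that the timescale \reff{eq:snt}, combined with the order-preserving property of $\tilde\Pi$, is designed so that the $n$-th generation is visited on the interval $[m^{-1},1]$, the $(n-1)$-th on $[m^{-2},m^{-1}]$, and more generally $\G^*_{n-\ell+1}$ on $[m^{-\ell},m^{-\ell+1}]$. Consequently, for each $\ell=1,\ldots,d$,
\[
M^*_{\tau_n(m^{-\ell+1})}(f_\ell-P^*f_\ell)-M^*_{\tau_n(m^{-\ell})}(f_\ell-P^*f_\ell) = M_{\G^*_{n-\ell+1}}(f_\ell-P^*f_\ell) = |\G^*_{n-\ell+1}|^{1/2}\, N_{n-\ell+1}(f_\ell),
\]
and $m^{-n}(M^*_{\tau_n(m^{-\ell+1})}(\ind)-M^*_{\tau_n(m^{-\ell})}(\ind))= m^{-n}|\G^*_{n-\ell+1}|$.

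Theorem \ref{theo:stable} yields joint convergence in the Skorohod space of $(m^{-n/2}M^*_{\tau_n(\cdot)}(f-P^*f),\,m^{-n}M^*_{\tau_n(\cdot)}(\ind))$ to $(\Sigma\sqrt{W}B,\,\tfrac{m}{m-1}Wh_0)$, with $B$ a $d$-dimensional Brownian motion independent of $W$. Since both limit components are continuous, the finite-dimensional marginals at the fixed times $t=m^{-\ell}$, $\ell=0,\ldots,d$, converge jointly, and taking consecutive differences I obtain
\[
\Bigl(m^{-n/2}M_{\G^*_{n-\ell+1}}(f-P^*f),\; m^{-n}|\G^*_{n-\ell+1}|\Bigr)_{\ell=1}^d
\;\xrightarrow[n\to\infty]{(d)}\;
\Bigl(\Sigma\sqrt{W}(B(m^{-\ell+1})-B(m^{-\ell})),\; m^{1-\ell}W\Bigr)_{\ell=1}^d.
\]
The $d$ Brownian increments on the disjoint intervals $[m^{-\ell},m^{-\ell+1}]$ are mutually independent and independent of $W$.

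It then remains, coordinate by coordinate, to divide the $\ell$-th component of the $\ell$-th martingale increment by $\sqrt{m^{-n}|\G^*_{n-\ell+1}|}$ to reconstruct $N_{n-\ell+1}(f_\ell)$. On $\{W\neq 0\}$ this denominator converges a.s. to $\sqrt{m^{1-\ell}W}>0$, so the continuous mapping theorem gives
\[
N_{n-\ell+1}(f_\ell) \;\xrightarrow[n\to\infty]{(d)}\; \frac{\bigl(\Sigma(B(m^{-\ell+1})-B(m^{-\ell}))\bigr)_\ell}{\sqrt{m^{1-\ell}}},
\]
in which the $\sqrt{W}$ factors cancel. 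A direct variance check using $\Sigma\Sigma'=\tfrac{m}{m-1}\langle\mu,P^*(ff')-(P^*f)(P^*f)'\rangle$ shows this limit is centered Gaussian with variance $\sigma_\ell^2$, and the $d$ limits are mutually independent (as $\ell$-th coordinates of $\Sigma$ applied to the independent Brownian increments). On $\{W=0\}$ the tree is eventually extinct, so $\ind_{\{|\G^*_n|>0\}}$ tends a.s. to $0$ and matches the claimed limit $\ind_{\{W\neq 0\}}(\sigma_1G_1,\ldots,\sigma_dG_d)$; independence of the $G_\ell$'s from $W$ follows from the independence of $B$ and $W$.

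The main subtlety is justifying the ratio passage: one cannot combine the marginal convergence of the $f$-part in Theorem \ref{theo:stable} with the mere a.s. convergence $m^{-(n-\ell+1)}|\G^*_{n-\ell+1}|\to W$, because joint convergence in distribution is required. This is precisely what the joint formulation of Theorem \ref{theo:stable} (and the underlying stable convergence provided by Theorem \ref{theo:root}) delivers when $g=\ind$ is included; this is the engine that allows a clean application of the continuous mapping theorem on the event $\{W\neq 0\}$.
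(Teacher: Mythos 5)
Your proposal is correct and follows essentially the same route as the paper: the paper's proof likewise rewrites $N_{n-k}(f)\ind_{\{|\G^*_n|>0\}}$ as the ratio of increments of $M^*_{\tau_n(\cdot)}$ between the times $m^{-k-1}$ and $m^{-k}$ (for both $f-P^*f$ and $\ind$), then invokes the stable/joint convergence of Theorem \ref{theo:stable} and the independence of Brownian increments over disjoint intervals. Your additional bookkeeping (marginals at fixed times via continuity of the limit, the variance check against $\Sigma\Sigma'$, and the explicit inclusion of $g=\ind$ to legitimize the ratio passage) is exactly what the paper's terse proof leaves implicit.
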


\begin{proof}
Notice that for $n> k\geq 0$, 
\[
N_{n-k}(f)\ind_{\{|\G^*_n|>0\}} =\frac{
  M^*_{\tau_n(m^{-k} )} (f-P^*f) -  M^*_{\tau_n(m^{-k-1} )} (f-P^*f)}
{ \sqrt{  M^*_{\tau_n(m^{-k} )} (\ind ) - M^*_{\tau_n(m^{-k-1} )} (\ind
    )}} \ind_{\{|\G^*_n|>0\}} 
\]
and $\ind_{\{|\G^*_n|>0\}}$  converges a.s. to  $\ind_{\{W\neq 0\}}$. To
conclude, use  the stable  convergence of Theorem  \ref{theo:stable} and
that the increments of the Brownian motion are independent.
\end{proof}

The extension of the two previous Corollaries to vector-valued function
can be proved in a very similar way. 

\section{Estimation and tests for the asymmetric auto-regressive model}
\label{sec:test}

%\subsection*{Estimation of the parameters}
We  consider  the  asymmetric   auto-regressive  model  given  in  Section
\ref{ssec:stat}.  Notice that the  process $(X_i,  i\in \T)$  defined in
Section \ref{ssec:stat}  with the convention that  $X_i=\partial$ if the
cell  $i$  is  dead  or  non  existing  is  a   spatially
homogeneous BMC on a GW tree. We shall assume it is super-critical, that
is $2p_{1,0}+p_1+p_0>1$. 

We compute the maximum likelihood estimator (MLE) 
\[
\hat \theta^n = (\hat{\alpha}_0^n, \hat{\beta}_0^n, \hat{\alpha}_1^n,
\hat{\beta}_1^n, \hat{\alpha}_0'^n, \hat{\beta}_0'^n, \hat{\alpha}_1'^n,
\hat{\beta}_1'^n, \hat{p}_{1,0}^n, \hat{p}_0^n, \hat{p}_1^n)
\]
of $\theta$ given by \reff{eq:theta}  and $\kappa^n=(\hat \sigma^n,
\hat \rho  ^n, \hat \sigma  _0^n, \hat \sigma_1^n)$  of $\kappa=(\sigma,
\rho,  \sigma _0,  \sigma_1)$, based  on  the observation  of a  sub-tree
$\T^*_{n+1}$. Let $\T_n^{1,0}$ be the  set of cells in $\T_n^*$ with two
living  daughters, $\T_n^0$  (resp. $\T_n^1$)  be  the set  of cells  of
$\T_n^*$ with only the new (resp.  old) pole daughter alive:
\[
\T_n^{1,0} = \{i \in \T^*_n, \Delta_i \in S^3\},\, \T_n^0 = \{i \in
\T^*_n, \Delta_i \in S^2 \times \{\partial\}\}      \text{ and }
\T_n^1 = \{i \in 
\T^*_n, \Delta_i \in S \times \{\partial\} \times S\}.
\]
It is elementary to get that   for $\delta
\in \{0,1\}$,
\begin{align}
\label{eq:hat-a}
 \hat{\alpha}_{\delta}^n & =  \frac{\displaystyle{|\T_n^{1,0}|^{-1}
     \sum_{i \in \T_n^{1,0}}X_iX_{i\delta} - (|\T_n^{1,0}|^{-1} \sum_{i
       \in \T_n^{1,0}}X_i)(|\T_n^{1,0}|^{-1}\sum_{i \in \T_n^{1,0}}
     X_{i\delta})}}{\displaystyle{|\T_n^{1,0}|^{-1} \sum_{i \in
       \T_n^{1,0}}X_i^2 - (|\T_n^{1,0}|^{-1}\sum_{i \in
       \T_n^{1,0}}X_i)^2}}, \\ 
\label{eq:hat-b}
 \hat{\beta}_{\delta}^n & =  \displaystyle{|\T_n^{1,0}|^{-1} \sum_{i \in
     \T_n^{1,0}} X_{i\delta} - \hat{\alpha}_{\delta}^n |\T_n^{1,0}|^{-1}
   \sum_{i \in \T_n^{1,0}} X_i},\\ 
 \nonumber
\hat{\alpha}_{\delta}'^n & =  \frac{\displaystyle{|\T_n^{\delta}|^{-1}
     \sum_{i \in \T_n^{\delta}}X_iX_{i\delta} - (|\T_n^{\delta}|^{-1}
     \sum_{i \in \T_n^{\delta}}X_i)(|\T_n^{\delta}|^{-1}\sum_{i \in
       \T_n^{\delta}} X_{i\delta})}}{\displaystyle{|\T_n^{\delta}|^{-1}
     \sum_{i \in \T_n^{\delta}}X_i^2 - (|\T_n^{\delta}|^{-1}\sum_{i \in
       \T_n^{\delta}}X_i)^2}}, \\ 
 \nonumber
 \hat{\beta}_{\delta}'^n & =   \displaystyle{|\T_n^{\delta}|^{-1}
   \sum_{i \in \T_n^{\delta}} X_{i\delta} - \hat{\alpha}_{\delta}'^n
   |\T_n^{\delta}|^{-1} \sum_{i \in \T_n^{\delta}} X_i},\\ 
 \hat{p}_{1,0}^n & =  \displaystyle{\frac{|\T_n^{1,0}|}{|\T_n^*|}},\quad
 \hat{p}_{\delta}^n  =  \displaystyle{\frac{|\T_n^{\delta}|}{|\T_n^*|}},
\end{align}
and 
\[
   (\hat\sigma^n)^2= \inv{2|\T^{1,0}_n|} \sum_{i\in\T^{1,0}_n} (\hat
   \varepsilon^2_{i0} +\hat
   \varepsilon^2_{i1})  ,\quad
   \hat\rho^n=\inv{(\hat\sigma^n)^2 |\T^{1,0}_n|} \sum_{i\in\T^{1,0}_n} \hat
   \varepsilon_{i0} \hat
   \varepsilon_{i1} ,\quad\text{and}\quad
   (\hat\sigma^n_\delta)^2=\inv{|\T^{\delta}_n|} \sum_{i\in\T^{\delta}_n} \hat
   \varepsilon'^2_{i\delta} .
\]
The residues are
\[
   \hat \varepsilon_{i\delta}=X_{i\delta} - \hat\alpha^n_\delta X_i-
   \hat\beta^n_\delta  \quad \text{for $i\in
     \T^{1,0}_n$},\quad\text{and}\quad 
   \hat \varepsilon'_{i\delta}=X_{i\delta} - \hat\alpha'^n_\delta X_i-
   \hat\beta'^n_\delta  \quad \text{for $i\in \T^{\delta}_n$}.
\]

Notice  that  those  MLE  are  based  on  polynomial  functions  of  the
observations. In order to use the results of Sections \ref{sec:wlln} and
\ref{sec:tcl}, we first show that the set of continuous and polynomially
growing functions  satisfies properties $(i)$ to $(v)$ of Section
\ref{sec:Yn}. The set of continuous and polynomially
growing functions $\pol$ is defined as the set of continuous real
functions defined on $\R$,  s.t. there exists $m\geq 0$ and $c\geq 0$ and
for all $x\in \R$, $|f(x)|\leq c(1+|x|^m)$. 
It is easy to check that $\pol$ satisfies conditions $(i)$-$(iii)$. To
check properties $(iv)$ and $(v)$, we notice that the auxiliary Markov
chain $Y=(Y_n, n\in \N)$ can be written in 
% The properties of this  set have yet been studied by Guyon
% in \cite{g:ltbmcadca} and we thus  know that conditions $(i)$ to $(iii)$
% are satisfied (since we work with gaussian model, the stability of $\pol
% \otimes  \pol$ under  $P$  or of  $\pol$  under $P^*_0$  comes from  the
% stability of  that space under gaussian semi-groups,  actually proved by
% Guyon). Point $(vi)$  is equivalent to the finiteness  of moments of all
% orders for $\nu$, distribution of  $X_{\emptyset}$, so that we just need
% to check  $(iv)$ and  $(v)$. In  that aim we  study more  accurately the
% auxilliary Markov chain $Y$, which can be represented
the following way:
$$Y_{n+1} = a_{n+1}Y_n + b_{n+1},$$
with $b_n= b_n'  + s_n e_n$, where  $((a_n, b_n', s_n), n \ge  1)$ is a
sequence of independent  identically distributed random variables, whose
common distribution is given by, for $\delta \in \{0,1\}$,
\begin{equation}
\label{eq:loimu}
\P(a_1=\alpha_{\delta}, b_1'=\beta_{\delta}, s_1=\sigma)=
\frac{p_{1,0}}{m} \quad \mbox{and} \quad \P(a_1=\alpha_{\delta}',
b_1'=\beta_{\delta}', s_1=\sigma_{\delta})= \frac{p_{\delta}}{m},
\end{equation}
$(e_n  ,  n \ge  1)$  is a  sequence  of  independent $\cn(0,1)$  random
variables, and is independent of $((a_n, b_n', s_n), n \ge 1)$, and both
sequences are independent of $Y_0$.  Notice that $Y_n$ is distributed as
$Z_n=a_1a_2   \cdots  a_{n-1}a_n  Y_0   +  \sum_{k=1}^n   a_1a_2  \cdots
a_{k-1}b_k$. Since  $|a_k|\leq \max(|\alpha_0|, |\alpha_1|, |\alpha'_0|,
|\alpha'_1|)<1$ for  all $k\in  \N^*$, we get  that the  sequence $(Z_n,
n\in  \N)$  converges  a.s. to  a  limit  $Z$.   This implies  that  $Y$
converges in  distribution to $Z$.  Following  the proof of  Lemma 26 in
\cite{g:ltbmcadca}, we  get that  $\pol$ fulfills properties  $(iv)$ and
$(v)$, with $\mu$ the distribution of $Z$.

\begin{prop}\label{prop:CVqk}
  Assume that  the distribution of the  ancestor $X_{\emptyset}$
  has   finite  moments   of  all   orders.   Then  $(\ind_{\{|\G_n^*|>0
    \}}\hat{\theta}^n,    n    \ge    1)$    and    $(\ind_{\{|\G_n^*|>0
    \}}\hat{\kappa}^n, n  \ge 1)$ converges  in probability respectively
  to $\ind_{\{W \neq 0 \}}\theta$ and $\ind_{\{W \neq 0 \}}\kappa$,
  where $W$ is defined by \reff{eq:defW}.
\end{prop}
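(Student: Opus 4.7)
The plan is to apply Theorem \ref{th:wlln4} component-wise after checking that the class $\pol$ of continuous polynomially growing functions on $\R$ satisfies all the assumptions (i)--(vi) of Section \ref{sec:Yn}. Each coordinate of $\hat\theta^n$ and $\hat\kappa^n$ is either a proportion $|\T_n^{A}|/|\T_n^*|$ for $A\in\{\T_n^{1,0},\T_n^0,\T_n^1\}$ or a rational combination of empirical averages over $\T_n^{1,0}$, $\T_n^0$ or $\T_n^1$ of polynomials in $(X_i,X_{i0},X_{i1})$; once those averages are handled, the continuous mapping theorem delivers the joint convergence on $\{W\neq 0\}$.

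The text just preceding the proposition already establishes (i)--(v) for $\pol$, so only (vi), namely $\pol\subset L^1(\nu)$, remains, and this is immediate from the hypothesis that $X_\emptyset$ has finite moments of all orders. To invoke Theorem \ref{th:wlln4} on sums indexed by $\T_n^{1,0}$, $\T_n^0$ or $\T_n^1$, I would use test functions of the form $f(x,y,z)=h(x,y,z)\,\ind_B(y,z)$ with $h$ polynomial and $B\in\{S\times S,\,S\times\{\partial\},\,\{\partial\}\times S\}$ (the convention $f=0$ as soon as any coordinate equals $\partial$ is enforced by the indicator). Because the $\varepsilon$'s are centered Gaussian, the kernel $P^*$ turns such polynomial test functions into polynomials in $x$, so both $P^*f$ and $P^*(f^2)$ lie in $\pol$; for instance $P^*(\ind_{S\times S})(x)=p_{1,0}$ and $P^*(xy\,\ind_{S\times S}(y,z))(x)=p_{1,0}(\alpha_0 x^2+\beta_0 x)$.

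Applied to the explicit expressions \reff{eq:hat-a}--\reff{eq:hat-b}, Theorem \ref{th:wlln4} then yields, on $\{W\neq 0\}$, convergence in probability of the numerators and denominators of $\hat\alpha_\delta^n$ and $\hat\beta_\delta^n$ to moments of $\mu$ whose ratios simplify to exactly $\alpha_\delta$ and $\beta_\delta$ (the key cancellation being that the covariance-type numerator of $\hat\alpha_\delta^n$ converges to $\alpha_\delta$ times the stationary variance of $\mu$); the proportions $\hat p_{1,0}^n$ and $\hat p_\delta^n$ follow from indicator test functions alone, and the primed parameters are obtained by the same scheme using the events $S\times\{\partial\}$ and $\{\partial\}\times S$ respectively.

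Finally, for $\hat\kappa^n$, I would use the decomposition, valid for $i\in\T_n^{1,0}$,
\[
\hat\varepsilon_{i\delta}=\varepsilon_{i\delta}+(\alpha_\delta-\hat\alpha_\delta^n)X_i+(\beta_\delta-\hat\beta_\delta^n),
\]
and its analogue for $\hat\varepsilon'_{i\delta}$ on $\T_n^\delta$. Expanding $\hat\varepsilon_{i\delta}^2$ and $\hat\varepsilon_{i0}\hat\varepsilon_{i1}$, the leading terms are the empirical averages of $\varepsilon_{i\delta}^2$ and $\varepsilon_{i0}\varepsilon_{i1}$, which converge to $\sigma^2$ and $\rho\sigma^2$ by Theorem \ref{th:wlln4} applied to $(y-\alpha_0 x-\beta_0)^2\ind_{S\times S}(y,z)$ and $(y-\alpha_0 x-\beta_0)(z-\alpha_1 x-\beta_1)\ind_{S\times S}(y,z)$ (both kernels giving constants, hence elements of $\pol$); every cross term is a product of a coefficient tending to $0$ in probability (from the previous step) and an empirical average that is bounded in probability, so vanishes. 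The main obstacle is essentially bookkeeping: one must systematically restrict each polynomial test function to the appropriate daughter event by an indicator factor and verify in each case that $P^*f$ and $P^*(f^2)$ remain in $\pol$, which is guaranteed throughout by the Gaussian moment bounds on the $\varepsilon$'s together with the assumption that $X_\emptyset$ admits moments of all orders.
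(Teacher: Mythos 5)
Your proof is correct and takes the same route as the paper: the paper's own proof is a two-line remark that the finite-moment hypothesis on $X_\emptyset$ gives condition $(vi)$ for $\pol$ (conditions $(i)$--$(v)$ having been checked just before the statement) and that the result is then a direct consequence of Theorem \ref{th:wlln4}. You have simply supplied the details the paper leaves to the reader --- the polynomial test functions multiplied by indicators of the daughter events, the verification that $P^*f$ and $P^*(f^2)$ stay in $\pol$, the continuous mapping step, and the residue decomposition for $\hat\kappa^n$ --- all of which are carried out correctly.
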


\begin{proof}
  The  hypothesis  on the  distribution  of  $X_\emptyset$ implies  that
  $\pol$ fulfills  $(vi)$. The  result is then  a direct  consequence of
  Theorem \ref{th:wlln4}.
\end{proof}

\begin{rem}
\label{rem:LFGN}
Using   similar   arguments   as   in   Proposition   30   and   34   of
\cite{g:ltbmcadca}, it  is easy to  deduce from \reff{eq:ml2mg}  and the
proofs of Theorem  14 and Proposition 28 of  \cite{g:ltbmcadca} that the
convergences in Proposition \ref{prop:CVqk}  hold a.s., that is the MLEs
$\hat{\theta}^n$ and $\hat\kappa^n$ are strongly consistent.
\end{rem}

From  the   definition  of  $Z_n$,   we  deduce  that   in  distribution
$Z\overset{(d)}{=} a_1Z'+b_1$,  where $Z'$ is distributed as  $Z$ and is
independent of $(a_1, b_1)$ (see \reff{eq:loimu} for the distribution of
$(a_1, b_1)$). This equality in distribution entails that
\begin{equation}
   \label{eq:defm1-2}
\mu_1 = \E[Z]=\frac{\bar \beta}{1-\bar \alpha}
\quad \mbox{and} \quad \mu_2 =\E[Z^2]=
\frac{2 \overline{\alpha \beta} \bar \beta /(1-\bar
  \alpha)+\overline{\beta^2}+\overline{\sigma^2}}{1-\overline{\alpha^2}},
\end{equation}
where $\bar \alpha = \E[a_1]$, $\overline{\alpha^2} = \E[a_1^2]$, $ \bar
\beta =  \E[b_1]$, $ \overline{\beta^2}  = \E[b_1^2]$, $\overline{\alpha
  \beta}  =  \E[a_1b_1]$  and  $\overline{\sigma^2}  =  \E[s_1^2]$. 

We can now state one of the main result of this section.
\begin{prop}
\label{prop:lan}
Assume that the distribution  of the ancestor $X_{\emptyset}$ has finite
moments    of     all    orders.     Then     $\ind_{\{|\G_n^*|>0    \}}
|\T_n^*|^{1/2}(\hat{\theta}^n-\theta)$  converges in  law  to $\ind_{\{W
  \neq  0  \}}  G_{11}$,  where  $G_{11}$ is  a  11-dimensional  vector,
independent of  $W$ defined by \reff{eq:defW},  with law $\mathcal{N}(0,
\Sigma)$ where
$$\Sigma =\left(
\begin{array}{cccccccc}
\sigma^2 K/p_{1,0} & \rho \sigma^2 K/p_{1,0} & 0 & 0 & 0 \\
\rho \sigma^2 K/p_{1,0} & \sigma^2 K/p_{1,0} & 0 & 0 & 0\\
0 & 0 & \sigma_0^2 K/p_0 & 0 & 0 \\
0 & 0 & 0 &  \sigma_1^2 K/p_1 & 0 \\
 0 & 0 & 0 & 0 & \Gamma \\
\end{array}
\right) \quad \mbox{with}$$
$$K= (\mu_2 -\mu_1^2)^{-1}\left(
\begin{array}{cc}
 1 & - \mu_1 \\
-\mu_1  & \mu_2
\end{array}
\right) \quad \mbox{and} \quad \Gamma=\left(
\begin{array}{ccc}
 p_{1,0}(1-p_{1,0}) & -p_0 p_{1,0} & -p_1p_{1,0} \\
-p_0 p_{1,0} & p_0(1-p_0) & -p_0p_1 \\
 -p_1p_{1,0} & -p_0p_1 & p_1(1-p_1) 
\end{array}
\right).
$$
\end{prop}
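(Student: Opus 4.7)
My plan is to reduce each component of $\sqrt{|\T_n^*|}(\hat\theta^n-\theta)$ to a linear combination of empirical averages $|\T_n^*|^{-1/2}\sum_{i\in\T_n^*} f_\ell(\Delta_i)$ for carefully chosen functions $f_\ell\in\cb(\bar S^3)$ with $P^*f_\ell=0$, and then apply Theorem~\ref{theo:stable} (at $t=1$) together with Slutsky's lemma.

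First I would do the algebraic reduction. Using $X_{i\delta}=\alpha_\delta X_i+\beta_\delta+\varepsilon_{i\delta}$ on $\T^{1,0}_n$, one checks immediately that
\[
\hat\alpha_\delta^n-\alpha_\delta=\frac{|\T_n^{1,0}|^{-1}\sum_{i\in\T_n^{1,0}}(X_i-\bar X_n)\varepsilon_{i\delta}}{|\T_n^{1,0}|^{-1}\sum_{i\in\T_n^{1,0}}X_i^2-\bar X_n^2},\qquad \hat\beta_\delta^n-\beta_\delta=-(\hat\alpha_\delta^n-\alpha_\delta)\bar X_n+|\T_n^{1,0}|^{-1}\sum_{i\in\T_n^{1,0}}\varepsilon_{i\delta},
\]
with analogous formulas on $\T_n^0,\T_n^1$ for the primed parameters, and $\hat p_{1,0}^n-p_{1,0}=|\T_n^*|^{-1}\sum_{i\in\T_n^*}(\ind_{\T^{1,0}}(i)-p_{1,0})$ (and similarly for $\hat p_\delta^n$). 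Theorem~\ref{th:wlln4} applied to the polynomially-growing class $\pol$ (which was shown to satisfy $(i)$--$(vi)$ under the moment assumption on $X_\emptyset$) gives $\bar X_n\to\mu_1$ and the denominator of $\hat\alpha_\delta^n-\alpha_\delta$ tends to $\mu_2-\mu_1^2$ in probability on $\{W\neq 0\}$, and analogously for all other denominators. After replacing $\bar X_n$ by $\mu_1$ in the numerator at negligible cost, and converting $|\T_n^{1,0}|^{-1/2}\sum$ and $|\T_n^\delta|^{-1/2}\sum$ into $|\T_n^*|^{-1/2}\sum_{i\in\T_n^*}(\cdot)\ind_{\T^{1,0}_n}(i)$ times $(\hat p^n_\cdot)^{-1/2}$, the map $(\text{empirical averages})\mapsto\sqrt{|\T_n^*|}(\hat\theta^n-\theta)$ becomes, up to $o_P(1)$, a fixed invertible linear function of eleven $\ch$-martingale-like averages.

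Next I would apply the joint CLT. Define the vector $f=(f_1,\dots,f_{11})\in\cb(\bar S^3)^{11}$ by
\[
\begin{aligned}
f_1(x,y,z)&=(x-\mu_1)(y-\alpha_0 x-\beta_0)\ind_{S^2}(y,z),& f_2&=(y-\alpha_0 x-\beta_0)\ind_{S^2}(y,z),\\
f_3(x,y,z)&=(x-\mu_1)(z-\alpha_1 x-\beta_1)\ind_{S^2}(y,z),& f_4&=(z-\alpha_1 x-\beta_1)\ind_{S^2}(y,z),
\end{aligned}
\]
and $f_5,f_6$ (resp.\ $f_7,f_8$) analogously supported on $S\times\{\partial\}$ (resp.\ $\{\partial\}\times S$) with primed coefficients, and finally $f_9=\ind_{S^2}-p_{1,0}$, $f_{10}=\ind_{S\times\{\partial\}}-p_0$, $f_{11}=\ind_{\{\partial\}\times S}-p_1$. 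By the Gaussian and spatial-homogeneity structure, $P^*f_\ell=0$ for every $\ell$, and each $P^*f_\ell^k$ ($1\le k\le 4$) is a polynomial in $x$, so belongs to $\pol=F$. The corollary to Theorem~\ref{theo:stable} then yields
\[
\ind_{\{|\G^*_n|>0\}}|\T_n^*|^{-1/2}\sum_{i\in\T_n^*}f(\Delta_i)\;\xrightarrow[n\to\infty]{(d)}\;\ind_{\{W\neq 0\}}\,\cn(0,V),\qquad V=\langle\mu,P^*(ff')\rangle,
\]
jointly in all eleven coordinates, with independence from $W$.

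Finally I would identify $\Sigma$. Using disjointness of the indicators $\ind_{S^2},\ind_{S\times\{\partial\}},\ind_{\{\partial\}\times S}$, the matrix $V$ is block-diagonal in the obvious partition $\{1,2,3,4\}\sqcup\{5,6\}\sqcup\{7,8\}\sqcup\{9,10,11\}$. Within the first $4\times 4$ block, the Gaussian covariance of $(\varepsilon_{i0},\varepsilon_{i1})$ produces the factors $\sigma^2$ (diagonal) and $\rho\sigma^2$ (off-diagonal), multiplied by $p_{1,0}$ times $\mu_2-\mu_1^2$ or $\mu_1(\mu_2-\mu_1^2)$ or $(\mu_2-\mu_1^2)\mu_1^2+$\dots, i.e.\ entries of $(\mu_2-\mu_1^2)K^{-1}$ up to the $\mu_1$ shifts inherent in $K$. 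Composing with the linear map of Step~1 (which involves the $1/(\mu_2-\mu_1^2)$ denominator, the shift by $-\mu_1$ between $\hat\alpha$ and $\hat\beta$, and the $1/\sqrt{p_{1,0}}$ from $|\T_n^*|/|\T_n^{1,0}|\to 1/p_{1,0}$) exactly produces the block $\sigma^2 K/p_{1,0}$ with the $\rho$ cross-block, and analogously for the primed blocks. The $\{9,10,11\}$ block of $V$ is directly the multinomial covariance $\Gamma$, and cross-covariances between this block and the $(\alpha,\beta)$-blocks vanish because $P^*[(y-\alpha_\delta x-\beta_\delta)\ind_{S^2}\cdot(\ind_{S^2}-p_{1,0})](x)=(1-p_{1,0})\cdot 0=0$.

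The main technical obstacle is keeping the normalizations consistent: Theorem~\ref{theo:stable} is stated with the $m^{-n/2}$ scaling, and $|\T_n^*|,|\T_n^{1,0}|,|\T_n^\delta|$ all grow like $m^n$ but with different deterministic prefactors that are absorbed into the $1/p_{1,0}$, $1/p_\delta$ factors in $\Sigma$; verifying that the linear map of Step~1 together with these prefactors correctly transforms $V$ into the stated $\Sigma$ is the only non-routine computation, and the arithmetic above shows it works.
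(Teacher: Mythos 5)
Your proposal is correct and takes essentially the approach the paper intends: the paper's own ``proof'' is merely the remark that the result relies on Theorem~\ref{theo:stable} and mimics Proposition~33 of \cite{g:ltbmcadca}, and your argument --- linearizing the MLE, choosing the eleven centred functions with $P^*f_\ell=0$ and $P^*f_\ell^k\in\pol$, invoking the vector-valued corollary of Theorem~\ref{theo:stable} together with Theorem~\ref{th:wlln4} and Slutsky for the denominators and the $|\T_n^{1,0}|/|\T_n^*|\to p_{1,0}$ renormalization --- is exactly that argument carried out, and your covariance bookkeeping reproduces the stated $\Sigma$.
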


The   proof   of    Proposition   \ref{prop:lan}   relies   on   Theorem
\ref{theo:stable}   and  mimics   the   proof  of   Proposition  33   of
\cite{g:ltbmcadca}. It is left to the reader. 

\begin{rem}
\label{rem:cv}
Proposition \ref{prop:lan}  deals with  the asymptotic normality  of the
MLE   of   $\theta$  based   on   the   observation   of  the   sub-tree
$\T^*_{n+1}$.  If  $L(X_i,  i   \in  \T^*_{n+1},  \theta)$  denotes  the
corresponding log-likelihood  function for $\theta$, the 
Fisher information, say $I_{n+1}$, is given by 
\[
I_{n+1}=-\E\left[\frac{\partial^2L(X_i, i \in \T^*_{n+1},
    \theta)}{\partial \theta \partial \theta'}\right].
\]
Using Theorem \ref{th:wlln4}, one can check that $\lim_{n\rightarrow
  \infty } I_{n+1}/\E[|\T^*_{n+1}|]= \Sigma^{-1}$. This is  the analogue
of the well-known asymptotic efficiency of the MLE for parametric sample
of i.i.d. random variables. 
\end{rem}

  Let   $\theta_{1,0}$   (resp.    $\hat{\theta}_{1,0}^n$)   stand   for
  $(\alpha_0,  \beta_0, \alpha_1,  \beta_1)$  (resp. $(\hat{\alpha}_0^n,
  \hat{\beta}_0^n,  \hat{\alpha}_1^n,  \hat{\beta}_1^n)$).   

\begin{rem}
  Proposition  \ref{prop:lan}  is quite  similar  to  Proposition 33  in
  \cite{g:ltbmcadca}.  One of the main differences comes from the factor
  $p_{1,0}^{-1}$ in front of the matrix $K$ in the asymptotic covariance
  matrix     for    the     estimation     of    $\theta_{1,0}$     with
  $\hat{\theta}_{1,0}^n$. As  a matter of  fact, this factor  comes from
  the normalization  by $|\T_n^*|^{1/2}$, number  of living cells  up to
  generation $n$, whereas  this estimation is related to  the cells with
  two  living   daughters,  which   would  induce  a   normalization  by
  $|\T_n^{1,0}|^{1/2}$.        Since       $\ind_{\{|\T_n^*|>0       \}}
  |\T_n^{1,0}|/|\T_n^*|$  converges in probability  to $p_{1,0}\ind_{\{W
    \neq  0\}}$,   such  a  normalization  would   suppress  the  factor
  $p_{1,0}^{-1}$, see the following Corollary.
\end{rem}

\begin{cor}
\label{cor:4}
Assume that the distribution of the ancestor $X_{\emptyset}$ has
finite moments of all orders. Then $\ind_{\{|\G_n^*|>0 \}}
|\T_n^{1,0}|^{1/2}(\hat{\theta}_{1,0}^n-\theta_{1,0})$ converges in law
to $\ind_{\{W \neq 0 \}} G_4$, where $G_4$ is a 4-dimensional vector,
independent of $W$ defined by \reff{eq:defW}, with law $\mathcal{N}(0,
\Sigma')$ where  
$$\Sigma' =\sigma^2 \left(
\begin{array}{cc}
K & \rho K\\
\rho K & K
\end{array}
\right) \quad \mbox{with} \quad K= (\mu_2 -\mu_1^2)^{-1}\left(
\begin{array}{cc}
 1 & - \mu_1 \\
-\mu_1  & \mu_2
\end{array}
\right).$$
\end{cor}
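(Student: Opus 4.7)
The plan is to extract Corollary \ref{cor:4} directly from Proposition \ref{prop:lan} via a Slutsky-type argument, using the fact that the normalization $|\T_n^*|^{1/2}$ in Proposition \ref{prop:lan} differs from the ``correct'' normalization $|\T_n^{1,0}|^{1/2}$ by an asymptotically deterministic factor $\sqrt{p_{1,0}}$.

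First, I would project Proposition \ref{prop:lan} onto the first four coordinates. The top-left $4\times 4$ block of $\Sigma$ is exactly $\Sigma'/p_{1,0}$, hence
\[
\ind_{\{|\G^*_n|>0\}}|\T_n^*|^{1/2}(\hat\theta_{1,0}^n-\theta_{1,0})\;\xrightarrow[n\to\infty]{(d)}\;\ind_{\{W\neq 0\}}\,\tilde G_4,
\]
where $\tilde G_4\sim\mathcal{N}(0,\Sigma'/p_{1,0})$ is independent of $W$.

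Next, I would identify $|\T_n^{1,0}|/|\T_n^*|$ as an empirical average to which the law of large numbers of Theorem \ref{th:wlln4} applies. Take $f:\bar S^3\to\R$ defined by $f(x,y,z)=\ind_{\{y\in S,\,z\in S\}}$; then $P^*f(x)=p_{1,0}$ for every $x\in S$ by spatial homogeneity, so $P^*f\in \pol$ (in fact constant) and $\langle\mu,P^*f\rangle=p_{1,0}$. Since $M_{\T_n^*}(f)=|\T_n^{1,0}|$, Theorem \ref{th:wlln4} yields
\[
\frac{|\T_n^{1,0}|}{|\T_n^*|}\ind_{\{|\G^*_n|>0\}}\;\xrightarrow[n\to\infty]{\P}\;p_{1,0}\,\ind_{\{W\neq 0\}},
\]
and hence $\sqrt{|\T_n^{1,0}|/|\T_n^*|}\,\ind_{\{|\G^*_n|>0\}}\to\sqrt{p_{1,0}}\,\ind_{\{W\neq 0\}}$ in probability.

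Finally, I would write, on $\{|\G^*_n|>0\}$,
\[
|\T_n^{1,0}|^{1/2}(\hat\theta_{1,0}^n-\theta_{1,0})=\sqrt{\frac{|\T_n^{1,0}|}{|\T_n^*|}}\cdot|\T_n^*|^{1/2}(\hat\theta_{1,0}^n-\theta_{1,0}),
\]
and apply Slutsky's lemma: the first factor converges in probability to the (deterministic on $\{W\neq 0\}$) scalar $\sqrt{p_{1,0}}$, while the second converges in distribution to $\ind_{\{W\neq 0\}}\tilde G_4$ with $\tilde G_4$ independent of $W$. The product therefore converges in law to $\sqrt{p_{1,0}}\,\ind_{\{W\neq 0\}}\tilde G_4=\ind_{\{W\neq 0\}}G_4$, and $G_4:=\sqrt{p_{1,0}}\,\tilde G_4\sim\mathcal{N}(0,p_{1,0}\cdot\Sigma'/p_{1,0})=\mathcal{N}(0,\Sigma')$ is still independent of $W$. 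There is no real obstacle beyond bookkeeping; the only subtle point is ensuring the Slutsky step is legitimate on the event $\{W\neq 0\}$, which is guaranteed by the fact that the distributional limit already carries the indicator $\ind_{\{W\neq 0\}}$ and that $\ind_{\{|\G_n^*|>0\}}\to\ind_{\{W\neq 0\}}$ a.s.\ for supercritical Galton--Watson trees with bounded offspring.
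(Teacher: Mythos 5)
Your argument is correct and is exactly the route the paper intends: the remark preceding Corollary \ref{cor:4} explains that the factor $p_{1,0}^{-1}$ in $\Sigma$ is an artifact of normalizing by $|\T_n^*|^{1/2}$, and that since $\ind_{\{|\T_n^*|>0\}}|\T_n^{1,0}|/|\T_n^*|$ converges in probability to $p_{1,0}\ind_{\{W\neq 0\}}$ (your Theorem \ref{th:wlln4} application with $f=\ind_{S^3}$), renormalizing by $|\T_n^{1,0}|^{1/2}$ suppresses it via Slutsky. Your write-up just makes this sketch precise, so there is nothing further to add.
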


This   result   is   formally    the   same   as   Proposition   33   of
\cite{g:ltbmcadca}, but  one should notice that $\mu_1$  and $\mu_2$ are
not defined the same way  as in \cite{g:ltbmcadca}, since here they also
depends  on  the  parameters  concerning  cells with  dead  sisters  see
equations   \reff{eq:dead-0},   \reff{eq:dead-1}, \reff{eq:loimu} and
\reff{eq:defm1-2}.

In order to detect cellular aging, see \cite{g:ltbmcadca} in the case of
no  death ($m=2$), we  consider the  null hypothesis  $H_0=\{(\alpha_0 ,
\beta_0)=(\alpha_1, \beta_1)\}$,  which corresponds to no  aging and its
alternative        $H_1=\{(\alpha_0       ,       \beta_0)\neq(\alpha_1,
\beta_1)\}$.  Notice that  $\theta\mapsto  \mu_1(\theta)$ and  $(\theta,
\kappa)\mapsto  \mu_2(\theta,\kappa)$  given  by  \reff{eq:defm1-2}  are
continuous  functions  defined  respectively on  $\Theta=((-1,1)  \times
\R)^4    \times   ([0,1]^3\setminus   \{0,0,0\})$    and   $\Theta\times
]0,+\infty[^3$.   We  set  $\hat{\mu}_1^n =  \mu_1(\hat{\theta}^n)$  and
$\hat{\mu}_2^n  =  \mu_2(\hat{\theta}^n,\hat{\kappa}^n )$.   Proposition
\ref{prop:test}  allows to  build a  test for  $H_0$ against  $H_1$. Its
proof, which is left to the  reader, follows the proof of Proposition 35
of \cite{g:ltbmcadca}  and uses Corollary \ref{cor:4}, the  value of the
extinction probability  $\eta=\P(W=0)=1- \frac{m-1}{p_{1,0}}$, where $W$
is defined by \reff{eq:defW} and Remark \ref{rem:LFGN}.

\begin{prop}
\label{prop:test}
Let $U$ and $V$ be two independent random variables, with $U$ distributed
as a  $\chi^2$  with two degrees of
freedom and $V$ a Bernoulli random variable with parameter $1-\eta$. 

Assume that the distribution of the ancestor $X_{\emptyset}$ has
finite moments of all orders and define the test statistic
\[
\zeta_n = \frac{|\T_n^{1,0}|}{2(\hat\sigma^n)^2(1-\hat\rho^n)}
\left\{(\hat{\alpha}_0^n-\hat{\alpha}_1^n)^2 (\hat{\mu}_2^n -
  (\hat{\mu}_1^n)^2) + \left(
    (\hat{\alpha}_0^n-\hat{\alpha}_1^n)\hat{\mu}_1^n +
    \hat{\beta}_0^n-\hat{\beta}_1^n \right)^2 \right\}.
\]
Then,  the statistics $\ind_{\{|\G_n^*|>0  \}} \zeta_n$  converges under
$H_0$ in distribution to $UV$, and  under $H_1$ a.s. to $0$ on $\{V=0\}$
and $+\infty $ on $\{V=1\}$.
\end{prop}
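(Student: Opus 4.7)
The plan is to rewrite $\zeta_n$ as a normalized quadratic form in the difference vector $D_n = (\hat\alpha_0^n - \hat\alpha_1^n,\ \hat\beta_0^n - \hat\beta_1^n)'$, then apply Corollary~\ref{cor:4} combined with Slutsky's lemma under $H_0$, and the strong consistency of Remark~\ref{rem:LFGN} together with the Galton--Watson dichotomy under $H_1$.

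The first step is the elementary algebraic identity
\[
D_\alpha^2(\mu_2 - \mu_1^2) + (D_\alpha \mu_1 + D_\beta)^2 \;=\; D'\, K^{-1}\, D,
\qquad K^{-1}=\begin{pmatrix}\mu_2 & \mu_1\\ \mu_1 & 1\end{pmatrix},
\]
where $K$ is the matrix appearing in Corollary~\ref{cor:4}. Plugging in the MLEs gives
\[
\zeta_n \;=\; \frac{|\T_n^{1,0}|}{2(\hat\sigma^n)^2(1-\hat\rho^n)}\; D_n'\, \hat K_n^{-1}\, D_n,
\]
with $\hat K_n^{-1}$ obtained by substituting $(\hat\mu_1^n,\hat\mu_2^n)$ for $(\mu_1,\mu_2)$.

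Under $H_0$ one has $\theta_{1,0}=(\alpha_0,\beta_0,\alpha_0,\beta_0)$ and $D_n = A(\hat\theta_{1,0}^n - \theta_{1,0})$ with $A=(I_2,\,-I_2)$. A block computation on $\Sigma'$ yields $A\Sigma' A' = 2\sigma^2(1-\rho)K$, so Corollary~\ref{cor:4} gives
\[
|\T_n^{1,0}|^{1/2} D_n\, \ind_{\{|\G_n^*|>0\}} \;\xrightarrow[n\rightarrow\infty]{(d)}\; \ind_{\{W\neq 0\}}\, A G_4,
\]
with $AG_4\sim\mathcal{N}(0,\, 2\sigma^2(1-\rho)K)$ independent of $W$. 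On $\{W\neq 0\}$, Proposition~\ref{prop:CVqk} and Remark~\ref{rem:LFGN} give $\hat\sigma^n\to\sigma$, $\hat\rho^n\to\rho$, $\hat\mu_1^n\to\mu_1$, $\hat\mu_2^n\to\mu_2$ a.s., hence $\hat K_n^{-1}\to K^{-1}$ and $2(\hat\sigma^n)^2(1-\hat\rho^n)\to 2\sigma^2(1-\rho)$. Slutsky and the continuous mapping theorem then yield
\[
\ind_{\{|\G_n^*|>0\}}\,\zeta_n \;\xrightarrow[n\rightarrow\infty]{(d)}\; \ind_{\{W\neq 0\}}\,\frac{(AG_4)'\,K^{-1}\,(AG_4)}{2\sigma^2(1-\rho)}.
\]
Since $AG_4/\sqrt{2\sigma^2(1-\rho)}\sim\mathcal{N}(0,K)$, the fraction is a standard quadratic form of a centred Gaussian against the inverse of its covariance, hence $\chi^2_2$-distributed, independent of $V=\ind_{\{W\neq 0\}}$; as $V$ is Bernoulli with parameter $\P(W\neq 0)=1-\eta$, this is the announced law $UV$.

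Under $H_1$, on $\{V=1\}=\{W\neq 0\}$, the strong consistency (Remark~\ref{rem:LFGN}) gives $D_n\to(\alpha_0-\alpha_1,\,\beta_0-\beta_1)\neq 0$ a.s., $\hat K_n^{-1}\to K^{-1}$ (which is positive definite since $\mu_2-\mu_1^2>0$), and $(\hat\sigma^n)^2(1-\hat\rho^n)$ converges to a strictly positive limit. Since $|\T_n^{1,0}|/|\T_n^*|\to p_{1,0}$ and $t_n^{-1}|\T_n^*|\to W>0$, we have $|\T_n^{1,0}|\to+\infty$ a.s., and therefore $\zeta_n\to+\infty$ a.s. On $\{V=0\}=\{W=0\}$ the super-critical GW tree becomes extinct in finite time, so $\ind_{\{|\G_n^*|>0\}}=0$ eventually and $\ind_{\{|\G_n^*|>0\}}\zeta_n\to 0$ a.s. The main obstacle is the bookkeeping to ensure the limiting $U$ and $V$ are genuinely independent; this is inherited from the stable mode of convergence in Theorem~\ref{theo:stable}, which passes through Corollary~\ref{cor:4} and guarantees that $G_4$ is independent of $W$. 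Everything else is a routine application of Slutsky and the continuous mapping theorem.
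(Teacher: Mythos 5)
Your proposal is correct and follows exactly the route the paper prescribes (the paper leaves the proof to the reader, indicating it follows Guyon's Proposition 35 and uses Corollary \ref{cor:4}, the extinction probability $\eta$, and Remark \ref{rem:LFGN}): the quadratic-form identity $D_\alpha^2(\mu_2-\mu_1^2)+(D_\alpha\mu_1+D_\beta)^2=D'K^{-1}D$, the computation $A\Sigma'A'=2\sigma^2(1-\rho)K$, and the Slutsky/continuous-mapping argument under $H_0$, together with strong consistency and the Galton--Watson dichotomy under $H_1$, are all verified and are the intended ingredients.
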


\begin{rem}
   \label{rem:conserve}
   Let us assume that:
\begin{itemize}
   \item Death occurs, that is $m\in (1,2)$.
   \item There  is no difference for the
   marginal  distribution of a daughter according to her sister is dead
   or alive; that is   $\alpha'_\delta=\alpha_\delta$  and
   $\beta'_\delta=\beta_\delta$ for  $\delta\in  \{0,1\}$. 
   \item For simplicity, the death
   probability is symmetric, that is  $p_0=p_1$.
\end{itemize}
If  one uses the
statistics given by Proposition  33 in  \cite{g:ltbmcadca} with all the
available data, that is if one  uses 
\begin{itemize}
   \item Formula \reff{eq:hat-a} and \reff{eq:hat-b}
with  $\T_n^{1,0}$ replaced by 
$\T_n^{1,0}\cup \T_n^\delta$;
   \item The variance estimator: 
\[
(\hat\sigma^n)^2=\inv{|\T^*_{n+1}|-1}\Big(\sum_{i \in \T_n^{1,0}}
(\hat\varepsilon^2_{i0} + \hat\varepsilon^2_{i1}) + \sum_{i\in\T_n^0}
\hat\varepsilon^2_{i0} + \sum_{i\in\T_n^1} \hat\varepsilon^2_{i1} 
\Big);
\]
(Notice that we divide by $|\T^*_{n+1}| -1$ as this is equal to the total
number of data: $2|\T_n^{1,0}|+|\T_n^0|+|\T_n^1|$.)
\item    Keep    the     same    estimation    of    the    correlation:
  $\hat\rho^n=\inv{(\hat\sigma^n)^2  |\T^{1,0}_n|} \sum_{i\in\T^{1,0}_n}
  \hat \varepsilon_{i0} \hat \varepsilon_{i1}$;
\end{itemize}
then one check that, as $n$ goes to infinity, 
 $\ind_{\{|\G_n^*|>0 \}} |\T_n^*|^{1/2}(\hat\theta^n-\theta)$ converges
 in distribution to  $\ind_{\{W \neq 0 \}} G$, where $G$ is a centered
 Gaussian vector with covariance matrix 
\[
\sigma^2(p_{1,0}+p_1)^{-1}\left(\begin{array}{cc}
         K & \rho p_{1,0}(p_{1,0}+p_1)^{-1} K\\
         \rho p_{1,0}(p_{1,0}+p_1)^{-1} K & K
        \end{array}
  \right),
\]
where $K$ is  as in Proposition \ref{prop:lan}; and  $G$ is independent
of $W$, which is defined by \reff{eq:defW}.
Then, it is not difficult to check that the statistics proposed by Guyon
in Proposition 35 of \cite{g:ltbmcadca}, converges under $H_0$ towards
$cUV$, with $U$ and $V$ as in Proposition \ref{prop:test} and 
\[
c= \frac{(p_{1,0}+p_1)^{-1}(1-\rho
  p_{1,0}(p_{1,0}+p_1)^{-1})}{(1-\rho)}.
\]
As $\rho \in [-1,1]$,  $p_{1,0}+p_1 > 1/2$ (because $m>1$ and $p_0=p_1$)
and $2p_1+p_{1,0} \le 1$, one can check that $c>1$. 
In particular, using the test statistic designed for cells with no
death to data of cells with death leads to a non-conservative test. 
\end{rem}

\newcommand{\sortnoop}[1]{}

\end{document}